%%
%% Copyright 2007, 2008, 2009 Elsevier Ltd
%%
%% This file is part of the 'Elsarticle Bundle'.
%% ---------------------------------------------
%%
%% It may be distributed under the conditions of the LaTeX Project Public
%% License, either version 1.2 of this license or (at your option) any
%% later version.  The latest version of this license is in
%%    http://www.latex-project.org/lppl.txt
%% and version 1.2 or later is part of all distributions of LaTeX
%% version 1999/12/01 or later.
%%
%% The list of all files belonging to the 'Elsarticle Bundle' is
%% given in the file `manifest.txt'.
%%

%% Template article for Elsevier's document class `elsarticle'
%% with numbered style bibliographic references
%% SP 2008/03/01
%%
%%
%%
%% $Id: elsarticle-template-num.tex 4 2009-10-24 08:22:58Z rishi $
%%
%%
%\documentclass[final,3p,times]{elsarticle}
\documentclass[preprint,12pt]{elsarticle}
\usepackage{amsthm,amsfonts,amssymb,amscd,amsmath,enumerate,verbatim,calc,graphicx,geometry}
\usepackage[all]{xy}
\newtheorem{theorem}{Theorem}[section]
\newtheorem{lemma}[theorem]{Lemma}
\newtheorem{proposition}[theorem]{Proposition}
\newtheorem{corollary}[theorem]{Corollary}
\theoremstyle{definition}
\theoremstyle{definitions}
\newtheorem{definition}[theorem]{Definition}

\newtheorem{example}[theorem]{Example}
\theoremstyle{notations}

\theoremstyle{remarks}

\journal{ }

\begin{document}

\begin{frontmatter}

%% Title, authors and addresses

%% use the tnoteref command within \title for footnotes;
%% use the tnotetext command for the associated footnote;
%% use the fnref command within \author or \address for footnotes;
%% use the fntext command for the associated footnote;
%% use the corref command within \author for corresponding author footnotes;
%% use the cortext command for the associated footnote;
%% use the ead command for the email address,
%% and the form \ead[url] for the home page:
%%
%% \title{Title\tnoteref{label1}}
%% \tnotetext[label1]{}
%% \author{Name\corref{cor1}\fnref{label2}}
%% \ead{email address}
%% \ead[url]{home page}
%% \fntext[label2]{}
%% \cortext[cor1]{}
%% \address{Address\fnref{label3}}
%% \fntext[label3]{}

\title{On Topologized Fundamental Group and covering spaces of topological groups}

%% use optional labels to link authors explicitly to addresses:
%% \author[label1,label2]{<author name>}
%% \address[label1]{<address>}
%% \address[label2]{<address>}

\author[]{Hamid~Torabi\corref{cor1}}
\ead{h.torabi@ferdowsi.um.ac.ir}

\address{Department of Pure Mathematics, Ferdowsi University of Mashhad,\\
P.O.Box 1159-91775, Mashhad, Iran.}
\cortext[cor1]{Corresponding author}
\begin{abstract}
  In this paper, we show that every topological group is a strong small loop transfer space at the identity element. This implies that the quasitopological fundamental group of a connected locally path connected toplogical group is a topological group. Also, we show that every covering space $\widetilde{G}$ of a connected locally path connected topological group $G$ is a topological group. Furthermore we prove that the covering map $p :\widetilde{G} \to G$ is homomorphism.
\end{abstract}

\begin{keyword}
Topological group\sep Quasitopological fundamental group\sep Covering map.
%% keywords here, in the form: keyword \sep keyword
\MSC[2010]{57M10, 57M12, 57M05}
%% MSC codes here, in the form: \MSC code \sep code
%% or \MSC[2008] code \sep code (2000 is the default)

\end{keyword}

\end{frontmatter}

%\\\\\\\\\\\\\\\\\\\\\\\\\\\\\\\\\\\\\\\\\\\\\\\\\\\\\\\\\\\\\\\\\\\\\\\\\\\\\\\\\\\\\\\\\\\\\\\\\\\\\\\\\\\\\\\\\\\\\\\\\\\\\\\\\\\\\\\\\
%=========================================================================================================================================
%/////////////////////////////////////////////////////////////////////////////////////////////////////////////////////////////////////////
\section{Introduction}
 The fundamental group endowed with the quotient topology induced by the natural surjective map $q : \Omega(X,x_{0})\rightarrow\pi_{1}(X,x_{0}) $, where $ \Omega(X,x_{0}) $ is the loop space of  $(X,x_{0})$ with compact-open topology, denoted by $\pi_{1}^{qtop}(X,x_{0}) $  becomes a quasitopological group (see \cite{2,3}). In fact,  $\pi_{1}^{qtop}(X,x_{0}) $  is not a topological group in general. However, it seems interesting to find out when the quasitopological fundamental group is a topological group. For example, Fabel \cite{10} showed that the quasitopological fundamental group of the Hawaiian earring is not a topological group. On the other hand, torabi et al. \cite{t2} proved that the quasitopological fundamental group of a connected locally path connected, semi locally small generated space is a topological group. In this paper we show that the quasitopological fundamental group of a connected locally path connected toplogical group is a topological group. \\
Spanier \cite[Theorem 13 on page 82]{s} introduced a different topology on the fundamental group which has been called the whisker topology by Brodskiy et al. \cite{BroU} and denoted by $\pi_{1}^{wh}(X,x_{0})$, which is not even quasitopological group, in general. Recal that for any pointed topological space $(X, x_{0})$ the whisker topology on the set $\pi_{1}(X,x_{0})$ is defined by the collection of all the following sets as a basis
 $$[\alpha]i_{*}\pi_{1}(U,x_{0})=\lbrace [\beta] \in \pi_{1}(X,x_{0}) \ \vert \ \beta \simeq \alpha\ast\delta \ for \ some \ loop \ 
  \delta:I\rightarrow U \rbrace,$$
 where $[\alpha] \in \pi_{1}(X,x_{0})$ and $U$ is an open neighborhood of $x_{0}$.\\
  The concept of small loop transfer space which have been introduced and studied by Brodskiy et al. \cite{BroU} is defined bellow.
\begin{definition}
 A topological space $X$ is called a small loop transfer (SLT for short) space at $x_{0}$ if for every path $ \alpha $ in $X$ with $\alpha(0)=x_{0}$  and for every neighborhood $U$ of $x_{0}$ there is a neighborhood $V$ of $\alpha(1)=x$ such that for every loop $\beta$ in $V$ based at $x$ there is a loop $\gamma$ in $U$ based at $x_{0}$  which is homotopic to 
$ \alpha\ast\beta\ast\bar{\alpha}$ relative to $\dot{I}$. The space $X$ is called an SLT space if $X$ is SLT at $x_{0}$ for every $x_{0 }\in{X}$.
\end{definition}
 Brodskiy et al. \cite{BroU} proved that if a peano space $X$ is an SLT space, then $\pi_{1}^{qtop}(X,x_0) = \pi_{1}^{wh}(X,x_0) $. In Section 2, we show that the whisker and the compact-open topologies on $\pi_{1}(G,e_G)$ coincide for a connected and locally path connected topological group $G$.\\
 
    The concepts of \textit{homotopically Hausdorff relative to} $H$ and \textit{homotopically path Hausdorff  relative to} $H$ are introduced in \cite{Zastrow} and \cite{BrazFa}, respectively. Let $H\leq \pi_{1}(X,x_{0})$. Recall that $X$ is said to be homotopically Hausdorff relative to $ H $ if for every $x\in{X}$, for every $ \alpha \in P(X,x_0) $ with $\alpha(1)=x$, and for any $ g \in{\pi_{1}(X,x_{0})}\setminus H$, there is an open neighborhood $ U_g $ of $ \alpha(1) $ in which there is no loop $ \gamma:(I, \dot{I})\rightarrow (U_g,\alpha(1)) $  such that $ [ \alpha\ast\gamma\ast{\alpha}^{-1}]\in Hg$, where $Hg$ is the right coset of $H$ in $\pi_{1}(X,x_{0})$ with respect to $g$ (see \cite[p. 190]{Zastrow}). Note that $X$ is homotopically Hausdorff if and only if $X$ is homotopically Hausdorff relative to the trivial subgroup $H = \lbrace 1\rbrace$. Also, $X$ is said to be homotopically path Hausdorff relative to $H$, if for every pair of paths $ \alpha, \beta \in P(X,x_0) $  with $ \alpha(1)=\beta(1) $  and $ [\alpha\ast{\beta}^{-1}] \notin H $, there is a partition $ 0=t_{0}<t_{1}<...<t_{n}=1 $ and a sequence of open subsets $ U_{1}, U_{2}, ..., U_{n} $ with  $ \alpha([t_{i-1},t_{i}])\subseteq U_{i} $ such that if $ \gamma:I \rightarrow X $  is another path satisfying $  \gamma([t_{i-1},t_{i}])\subseteq U_{i} $ for $1\leq i \leq n$ and $ \gamma(t_{i})=\alpha(t_{i})  $ for every $0\leq i\leq n $, then $ [\gamma \ast{\beta}^{-1}] \notin H (see $ \cite{BrazFa}). In Section 2, we show that for a subgroup $H$ of the fundamental group of topological group $G$, $G$ is homotopically Hausdorff relative to $H$ if and only if $G$ is homotopically path Hausdorff relative to $H$.\\
   
   Chevalley \cite{L} introduced a covering group theory for connected, locally path connected and semi locally simply connected topological groups. Rotman \cite[Theorem 10.42]{r} prove that for every covering space $(\widetilde{X}, p)$ of a connected, locally path connected, semi locally simply connected topological group $G$, $\widetilde{X}$ is a topological group and $p$ is a homomorphism. In section 3, we introduce covering theory for topological group and give a classification for covering groups of connected locally path connected topological groups. Also we show that for every covering space $(\widetilde{X}, p)$ of a connected, locally path connected topological group $G$, $\widetilde{X}$ is a topological group and $p$ is a homomorphism.

 \section{Topologized fundamental group of topological group}
Let $ G$ be a topological group and $\alpha$  be a path in $G$, then we denote the homotopy class $\alpha$ by $[\alpha]$ and the inverse of $\alpha$ by $\bar{\alpha}$ where $\bar{\alpha}:I \to G$ by $\bar{\alpha}(t)=\alpha(1-t)$. Also we define $\alpha^{-1}:I \to G$ by $\alpha^{-1}(t)= (\alpha(t))^{-1}$ and denote the constant path $\alpha:I \to G$ at $a \in G$ by $C_a$.

\begin{definition}\label{path}
Let $ G$ be a topological group with multiplication map $m: G \times G \to G$, given by $(x, y) \to xy$.  Let $\alpha, \beta$ be two paths in $G$. We define the path $ \alpha.\beta:I \to G$ by $ \alpha.\beta(t)=m(\alpha(t),\beta(t)).$
Since the  multiplication map  and $ \alpha ,\beta$ are continuous, $ \alpha.\beta:I \to G$ is continuous. 
\end{definition}

Let $f$  be a path in $G$ and $a \in G$. We denote the path $C_a.f$ and $f.C_a$ by $ ^af $ and $f^a$ respectively.

\begin{lemma}\label{multi}

 If $ G$ is a topological group and $f,g$ be two loops in $G$ based at $a \in G$ and $b \in G$ respectively, then $ [f.g]= [f^b][^ag]$. In particular, if $f,g$ be two loops in $G$ based at the identity element $e_G$, then $ [f.g]= [f][g]$.

\end{lemma}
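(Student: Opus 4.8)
\emph{Proof proposal.} The plan is to use an Eckmann--Hilton style \emph{interchange} argument relating the pointwise product $.$ to the concatenation $\ast$. The decisive observation is the interchange identity: whenever the concatenations below are defined, one has, as maps $I\to G$,
\[
(f\ast C_a).(C_b\ast g)=(f.C_b)\ast(C_a.g)=f^b\ast{}^ag.
\]
I would verify this by a direct computation, splitting $I$ at $t=1/2$. On $[0,1/2]$ the left-hand side equals $m(f(2t),b)=f^b(2t)$ (there $f\ast C_a$ equals $f(2t)$ and $C_b\ast g$ is constantly $b$), while on $[1/2,1]$ it equals $m(a,g(2t-1))={}^ag(2t-1)$; these are exactly the two halves of the concatenation $f^b\ast{}^ag$. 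Note that $f^b$ and $^ag$ are composable since $f^b(1)=m(a,b)=ab={}^ag(0)$, so the right-hand side is a genuine loop based at $ab$.

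Next I would import the standard path homotopies $f\simeq f\ast C_a$ and $g\simeq C_b\ast g$ rel $\dot{I}$; call them $F,G:I\times I\to G$. The key point is that the pointwise product $F.G$, defined by $(F.G)(t,s)=m(F(t,s),G(t,s))$, is continuous (as $m$ is continuous) and is again a homotopy \emph{rel $\dot{I}$}: since $F$ and $G$ fix endpoints throughout, for every $s$ we have $(F.G)(0,s)=m(a,b)=ab$ and $(F.G)(1,s)=m(a,b)=ab$, so the endpoints stay fixed. Evaluating $F.G$ at the two ends of the homotopy parameter $s$ yields a homotopy rel $\dot{I}$ from $(f\ast C_a).(C_b\ast g)$ to $f.g$. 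Combined with the interchange identity this gives $f.g\simeq f^b\ast{}^ag$, that is, $[f.g]=[f^b][{}^ag]$.

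Finally, for the special case $a=b=e_G$, observe that $f^{e_G}=f.C_{e_G}$ satisfies $f^{e_G}(t)=m(f(t),e_G)=f(t)$, so $f^{e_G}=f$, and likewise ${}^{e_G}g=g$; substituting into the identity just established gives $[f.g]=[f][g]$.

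I do not expect a serious obstacle here; the argument is elementary once the interchange identity is in place. The only points demanding care are the bookkeeping in verifying that $F.G$ remains a homotopy relative to $\dot{I}$ — where continuity of $m$ and the fixed-endpoint property of $F,G$ are both used — and checking that the splitting at $t=1/2$ in the interchange identity matches the definitions of $f^b$, $^ag$ and of concatenation exactly.
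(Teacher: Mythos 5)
Your argument is correct: the interchange identity $(f\ast C_a).(C_b\ast g)=f^b\ast{}^{a}g$ checks out on both halves of $I$, the pointwise product $F.G$ of two fixed-endpoint homotopies is indeed a homotopy rel $\dot{I}$ (both factors keep their endpoints at $a$ and $b$, so the product stays at $ab$), and the special case $a=b=e_G$ follows immediately. This is, however, a different execution from the paper's. The paper never writes down an explicit homotopy: it invokes the isomorphism $\theta:\pi_1(G,a)\times\pi_1(G,b)\to\pi_1(G\times G,(a,b))$, notes that $m_*\theta$ is a homomorphism, and applies it to the factorization $([f],[g])=([f],[C_b])([C_a],[g])$, reading off $[f.g]=m_*\theta([f],[g])=[f^b][{}^{a}g]$. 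Your interchange identity is exactly what makes that factorization true, so the two proofs are the same Eckmann--Hilton argument in different clothing: the paper's version is shorter but leans on the cited product-of-fundamental-groups theorem and the functoriality of $m_*$, while yours is self-contained and makes visible precisely where continuity of $m$ and the rel-$\dot{I}$ condition are used. Either is acceptable; yours would be preferable if one wanted the lemma to stand without external references.
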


\begin{proof}
Consider the continuous multiplication map $m: G \times G \to G$, given by $(x, y) \to xy$. Let $\theta: \pi_1(G, a) \times \pi_1(G, b) \to \pi_1(G\times G, (a,b))$ be the isomorphism defined by $([f],[g]) \to [(f,g)]$  (see \cite[Theorem 3.7]{r}). Since $m_*\theta:\pi_1(G, a) \times \pi_1(G, b) \to \pi_1(G, ab)$ is a homomorphism and $([f], [g]) = ([f], [C_b])([C_a], [g])$, we have
$$m_*\theta ([f],[g])=m_*\theta(([f], [C_b])([C_a], [g]))=m_*\theta(([f], [C_b])) m_*\theta(([C_a], [g]))= [f^b][^ag] .$$
On the other hand $m_*\theta ([f],[g])= [f.g]$, which implies that $[f^b][^ag] = [f.g]$
\end{proof}

The concept of strong small loop transfer space which have been introduced and studied by Brodskiy et al. \cite{BroU} is defined as follows.

\begin{definition}
A topological space $X$ is called a strong small loop transfer (strong SLT for short) space at $x_{0}$ if for every $x \in X$ and for every neighborhood $U$ of $x_{0}$ there is a neighborhood $V$ of $x$ such that for every path $ \alpha $ in $X$ with $\alpha(0)=x_{0}, \alpha(1)=x$  and  for every loop $\beta$ in $V$ based at $x$ there is a loop $\gamma$ in $U$ based at $x_{0}$  which is homotopic to 
$ \alpha\ast\beta\ast\bar{\alpha}$ relative to $\dot{I}$. The space $X$ is called a strong SLT space if $X$ is strong SLT at $x_{0}$ for every $x_{0 }\in{X}$.
\end{definition}

\begin{theorem}
A topological group $G$ is a strong SLT space at the identity element $e_G$.
\end{theorem}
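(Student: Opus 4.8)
The plan is to exploit the homogeneity of $G$ furnished by left translation, together with Lemma \ref{multi}, in order to replace the conjugate $\alpha \ast \beta \ast \bar\alpha$ by a genuinely small loop based at $e_G$. First I would fix $x \in G$ and an open neighborhood $U$ of $e_G$. Since the left translation $\ell_x : G \to G$, $g \mapsto xg$, is a homeomorphism, the set $V := xU$ is an open neighborhood of $x$ (indeed $x = x e_G \in xU$ as $e_G \in U$). My claim is that this $V$ witnesses the strong SLT property at $e_G$.

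Next I would fix an arbitrary path $\alpha$ with $\alpha(0) = e_G$, $\alpha(1) = x$, and an arbitrary loop $\beta$ in $V$ based at $x$, and define $\gamma := {}^{x^{-1}}\beta$, that is, $\gamma(t) = x^{-1}\beta(t)$. Because $\beta(t) \in V = xU$ for all $t$, we get $\gamma(t) = x^{-1}\beta(t) \in U$, so $\gamma$ is a loop in $U$; moreover $\gamma(0) = \gamma(1) = x^{-1}x = e_G$, so $\gamma$ is based at $e_G$. The whole statement then reduces to showing $\gamma \simeq \alpha \ast \beta \ast \bar\alpha$ relative to $\dot I$.

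The heart of the argument is to realize $\alpha \ast \beta \ast \bar\alpha$ as a pointwise product of two loops based at $e_G$. Writing all concatenations with the subdivision points $1/3$ and $2/3$, I would set $P := \alpha \ast C_x \ast \bar\alpha$ and $Q := C_{e_G} \ast \gamma \ast C_{e_G}$. Since pointwise multiplication commutes with concatenation once the subdivision points are aligned, a direct check on each of the three subintervals gives $P . Q = \alpha \ast \beta \ast \bar\alpha$: on the middle interval one has $x \cdot (x^{-1}\beta) = \beta$, while on the two outer intervals the relevant factor coming from $Q$ (respectively $P$) is constant at $e_G$. As $P$ and $Q$ are both loops based at $e_G$, Lemma \ref{multi} yields $[\alpha \ast \beta \ast \bar\alpha] = [P . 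Q] = [P][Q]$. Finally $P = \alpha \ast C_x \ast \bar\alpha \simeq \alpha \ast \bar\alpha \simeq C_{e_G}$, so $[P] = 1$, whereas $[Q] = [\gamma]$; hence $[\alpha \ast \beta \ast \bar\alpha] = [\gamma]$, which is precisely the homotopy demanded by the definition.

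The step that demands the most care — though it is not truly an obstacle — is verifying the exact identity $P . Q = \alpha \ast \beta \ast \bar\alpha$, since one must align the reparametrizations of the three concatenated pieces so that the pointwise product reproduces the triple concatenation on the nose rather than merely up to homotopy. Once this bookkeeping is arranged, everything else is a formal application of Lemma \ref{multi} and the standard nullhomotopy of $\alpha \ast \bar\alpha$, and the freedom to take $V = xU$ is exactly what makes the transferred loop $\gamma$ land inside $U$.
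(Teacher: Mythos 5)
Your proposal is correct and is essentially identical to the paper's own argument: the paper likewise takes $V = xU$, writes $\alpha\ast\beta\ast\bar\alpha$ as the pointwise product of the two $e_G$-based loops $f = \alpha\ast C_x\ast\bar\alpha$ and $g = C_{e_G}\ast(x^{-1}\beta)\ast C_{e_G}$ (your $P$ and $Q$), and applies Lemma~\ref{multi} to conclude $[\alpha\ast\beta\ast\bar\alpha]=[\gamma]$ with $\gamma = x^{-1}\beta$. No substantive differences.
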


\begin{proof}
Let $U$ be a neighborhood of $e_{G}$ in $G$ and $x \in G$. We show that for every loop $\beta$ based at $x$ in the neighborhood $xU = \{ xu | u \in U \}$ of $x$ and every path $ \alpha $ in $G$ with $\alpha(0)=e_{G} , \alpha(1) = x$, there is a loop $\gamma$ in $U$ based at $e_G$  which is homotopic to $ \alpha\ast\beta\ast\bar{\alpha}$ relative to $\dot{I}$. For this let $f$ be a loop in $G$ based at $e_G$ such that
\begin{displaymath}
f(t)= \left\{
\begin{array}{lr}
\alpha(3t)                     &       0\leq t\leq 1/3 \\
x     &      1/3\leq t\leq 2/3\\
\bar{\alpha}(3t-2)              &  2/3\leq t\leq 1.
\end{array}
\right.
\end{displaymath}
Also let $g$ be a loop in $G$ based at $e_G$ such that
\begin{displaymath}
g(t)= \left\{
\begin{array}{lr}
e_G                    &       0\leq t\leq 1/3 \\
x^{-1}\beta(3t-1)     &      1/3\leq t\leq 2/3\\
e_G             &  2/3\leq t\leq 1.
\end{array}
\right.
\end{displaymath}
Therefore by Lemma 2.2 we have $[f][g] = [f.g]$. Note that
\begin{displaymath}
(f.g)(t)= \left\{
\begin{array}{lr}
\alpha(3t)                     &       0\leq t\leq 1/3 \\
\beta(3t-1)     &      1/3\leq t\leq 2/3\\
\bar{\alpha}(3t-2)             &  2/3\leq t\leq 1.
\end{array}
\right.
\end{displaymath}
If $\gamma$ is a loop in $U$ based at $e_G$ such that for every $t \in I$, $\gamma (t) = x^{-1}\beta(t)$ then we have
$$[\gamma] = [\alpha * \bar{\alpha}][\gamma] = [f][g] = [f.g] = [\alpha\ast\beta\ast\bar{\alpha}] $$
Hence $G$ is a strong SLT space at $e_G$.
\end{proof}

It is easy to see that every strong SLT space at $x_{0}$  is an SLT space at $x_{0}$. Therefore we have the following results.

\begin{corollary}
Let $G$ be a topological group. Then $G$ is an SLT space at the identity element $e_G$.
\end{corollary}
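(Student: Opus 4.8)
The plan is to obtain the corollary immediately from the preceding Theorem together with the quantifier-weakening observation stated just before it, namely that every strong SLT space at $x_0$ is an SLT space at $x_0$. Since the Theorem already establishes that $G$ is a strong SLT space at $e_G$, it suffices to spell out why the strong SLT property at $e_G$ forces the (ordinary) SLT property at $e_G$.

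First I would recall the two definitions and isolate the precise difference in their logical structure. In the strong SLT condition the neighborhood $V$ of $x$ is produced first, depending only on $x$ and the neighborhood $U$ of $e_G$, and it is then required to work \emph{uniformly} for every path $\alpha$ from $e_G$ to $x$; in the ordinary SLT condition the path $\alpha$ is given in advance and the neighborhood $V$ of $\alpha(1)$ is allowed to depend on that particular $\alpha$. Thus the strong condition is a strengthening obtained by moving the quantifier over $\alpha$ inside the choice of $V$.

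Next I would verify the implication directly. Let $\alpha$ be any path in $G$ with $\alpha(0)=e_G$ and let $U$ be any neighborhood of $e_G$; set $x=\alpha(1)$. Applying the strong SLT property of $G$ at $e_G$ to this point $x$ and this neighborhood $U$ yields a neighborhood $V$ of $x$ such that, for \emph{every} path from $e_G$ to $x$ and every loop $\beta$ in $V$ based at $x$, there is a loop $\gamma$ in $U$ based at $e_G$ with $\gamma\simeq \alpha*\beta*\bar{\alpha}$ relative to $\dot{I}$. Specializing the universally quantified path to our given $\alpha$, we recover exactly the defining condition of an SLT space at $e_G$, which completes the argument.

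There is essentially no obstacle here: the only substantive content is the recognition that the uniform choice of $V$ guaranteed by the strong SLT property can always be reused for any prescribed path, so the weaker SLT requirement is automatically met. The corollary is therefore a formal consequence of the Theorem.
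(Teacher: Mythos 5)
Your proposal is correct and follows exactly the paper's route: the paper derives this corollary from Theorem 2.4 together with the remark that every strong SLT space at $x_0$ is an SLT space at $x_0$, which is precisely the quantifier-weakening argument you spell out. The only difference is that you make explicit the specialization of the universally quantified path to the given $\alpha$, which the paper leaves as "easy to see."
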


\begin{corollary}
Let $G$ be a connected and locally path connected topological group, then $\pi_{1}^{qtop}(G,e_G) = \pi_{1}^{wh}(G,e_G) $ is  a topological group.
\end{corollary}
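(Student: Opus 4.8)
\emph{Proof proposal.} The plan is to separate the statement into two parts: first the equality of the two topologies, then the verification that the common topology is that of a topological group. For the first part, I would invoke the machinery already assembled in this section. By the preceding corollary, $G$ is an SLT space at $e_G$; since left translations $L_g\colon G\to G$ are homeomorphisms carrying $e_G$ to $g$ and SLT at a point is preserved under homeomorphism, $G$ is in fact SLT at every point, hence an SLT space. As $G$ is connected and locally path connected it is a Peano space, so the cited theorem of Brodskiy et al.\ yields $\pi_{1}^{qtop}(G,e_G)=\pi_{1}^{wh}(G,e_G)$ as topological spaces. It remains to show this common space is a topological group.

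Since $\pi_{1}^{qtop}(G,e_G)$ is always a quasitopological group, inversion is continuous and multiplication is continuous in each variable separately; the only thing left to establish is the \emph{joint} continuity of the multiplication $\mu\colon([\alpha],[\beta])\mapsto[\alpha\ast\beta]$. I would work with the whisker description, in which a basic neighborhood of $[\alpha]$ is the right coset $[\alpha]\,i_{*}\pi_{1}(U,e_G)$, recalling that $[\gamma]\in[\alpha]\,i_{*}\pi_{1}(U,e_G)$ precisely when $[\bar{\alpha}\ast\gamma]\in i_{*}\pi_{1}(U,e_G)$.

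Fix a pair $([\alpha],[\beta])$ and a basic neighborhood $[\alpha\ast\beta]\,i_{*}\pi_{1}(U,e_G)$ of the product. Apply the SLT property at $e_G$ to the path $\bar{\beta}$ (which begins and ends at $e_G$, since $\beta$ is a loop based at $e_G$) and the neighborhood $U$ to obtain a neighborhood $V\subseteq U$ of $e_G$ such that for every loop $\delta$ in $V$ based at $e_G$ one has $[\bar{\beta}\ast\delta\ast\beta]\in i_{*}\pi_{1}(U,e_G)$. Set $W_{1}=[\alpha]\,i_{*}\pi_{1}(V,e_G)$ and $W_{2}=[\beta]\,i_{*}\pi_{1}(V,e_G)$. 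A typical product of elements of $W_1$ and $W_2$ has the form $[\alpha\ast\delta\ast\beta\ast\eta]$ with $\delta,\eta$ loops in $V$, and
$$[\alpha\ast\beta]^{-1}[\alpha\ast\delta\ast\beta\ast\eta]=[\bar{\beta}\ast\bar{\alpha}\ast\alpha\ast\delta\ast\beta\ast\eta]=[\bar{\beta}\ast\delta\ast\beta\ast\eta]=[\bar{\beta}\ast\delta\ast\beta]\,[\eta].$$
Both factors on the right lie in the subgroup $i_{*}\pi_{1}(U,e_G)$: the first by the choice of $V$, the second because $\eta$ is a loop in $V\subseteq U$. Hence $W_{1}\cdot W_{2}\subseteq[\alpha\ast\beta]\,i_{*}\pi_{1}(U,e_G)$, which is exactly joint continuity of $\mu$ at $([\alpha],[\beta])$.

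I expect the main obstacle to be precisely this last computation, namely isolating the conjugated small loop $\bar{\beta}\ast\delta\ast\beta$ and transferring it into $U$ via the SLT property while simultaneously keeping the residual loop $\eta$ inside $U$; the algebraic cancellation $\bar{\alpha}\ast\alpha\simeq C_{e_G}$ and the fact that $i_{*}\pi_{1}(U,e_G)$ is a subgroup are what make the two factors recombine correctly. Once joint continuity is in hand, combining it with the already-known continuity of inversion from the quasitopological structure completes the proof that $\pi_{1}^{qtop}(G,e_G)=\pi_{1}^{wh}(G,e_G)$ is a topological group.
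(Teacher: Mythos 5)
Your proposal is correct, and it shares the paper's overall skeleton---reduce to Corollary 2.5 ($G$ is SLT at $e_G$), deduce $\pi_{1}^{qtop}(G,e_G)=\pi_{1}^{wh}(G,e_G)$, then conclude the topological group property---but it fills in that skeleton quite differently. The paper's proof is two lines of citations: the equality of topologies is \cite[Theorem~2.5]{jamali} (stated for spaces that are SLT at the base point, so the paper never needs your left-translation argument upgrading SLT at $e_G$ to SLT everywhere; that argument is nevertheless valid and lets you quote the Brodskiy et al.\ result for SLT Peano spaces instead), and the topological group property is \cite[Corollary~2.6]{jamali}. You instead reprove the one nontrivial point, joint continuity of multiplication in the whisker topology, from scratch, and your computation checks out: applying SLT at $e_G$ to the path $\bar{\beta}$ and the neighborhood $U$ yields a $V$ with $[\bar{\beta}\ast\delta\ast\beta]\in i_{*}\pi_{1}(U,e_G)$ for all loops $\delta$ in $V$, and the factorization $[\alpha\ast\beta]^{-1}[\alpha\ast\delta\ast\beta\ast\eta]=[\bar{\beta}\ast\delta\ast\beta][\eta]$ together with the subgroup property of $i_{*}\pi_{1}(U,e_G)$ gives $W_{1}\cdot W_{2}\subseteq[\alpha\ast\beta]\,i_{*}\pi_{1}(U,e_G)$; continuity of inversion then comes for free from the quasitopological group structure (or from the fact that $\pi_{1}(G,e_G)$ is abelian, so $([\alpha][\delta])^{-1}=[\alpha]^{-1}[\bar{\delta}]$). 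The only cosmetic slip is calling $[\alpha]\,i_{*}\pi_{1}(U,e_G)$ a right coset where the usual convention would say left coset---immaterial here since the fundamental group of a topological group is abelian. What your route buys is self-containedness: it makes the corollary independent of the external reference by essentially reproving \cite[Corollary~2.6]{jamali} in the case at hand; what the paper's route buys is brevity.
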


\begin{proof}
By corollary 2.5, $G$ is an SLT space at $e_G$. Therefore $\pi_{1}^{qtop}(G,e_G) = \pi_{1}^{wh}(G,e_G) $ by \cite[Theorem 2.5]{jamali}. Hence $\pi_{1}^{qtop}(G,e_G)$ and $\pi_{1}^{wh}(G,e_G) $ are topological group by \cite[Corollary 2.6]{jamali}.
\end{proof}

\begin{proposition}
Let $G$ connected and locally path connected topological group and $H \leqslant \pi_{1}(G,e_G)$. Then the following statment are equivalent.

(i) $H$ is an open subgroup of $\pi_{1}^{qtop}(G,e_G)$.\

(ii) $H$ is an open subgroup of $\pi_{1}^{wh}(G,e_G)$.\

(iii) There is a neighborhood $U$ of $e_{G}$ s.t. $i_{*}\pi_{1}(U,e_G)  \leqslant  H $.\

\end{proposition}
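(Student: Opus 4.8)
The plan is to split the cycle of equivalences into the link $(i)\Leftrightarrow(ii)$, which is where the topological-group hypothesis enters, and the link $(ii)\Leftrightarrow(iii)$, which is a purely formal consequence of the definition of the whisker topology. For the first link I would invoke Corollary 2.6: since $G$ is connected and locally path connected, $\pi_1^{qtop}(G,e_G)=\pi_1^{wh}(G,e_G)$, so the two topologies on the underlying set $\pi_1(G,e_G)$ literally coincide. Consequently a subset is open in one exactly when it is open in the other, and in particular $(i)$ and $(ii)$ are equivalent with no further argument.

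For $(iii)\Rightarrow(ii)$ I would use that the sets $[\alpha]i_*\pi_1(U,e_G)$ form a basis for the whisker topology. Assuming $i_*\pi_1(U,e_G)\leqslant H$ for some open neighborhood $U$ of $e_G$, I would take an arbitrary $[\alpha]\in H$ and consider its basic whisker neighborhood $[\alpha]i_*\pi_1(U,e_G)$. Every element of this set has the form $[\alpha][\delta]$ with $[\delta]\in i_*\pi_1(U,e_G)\leqslant H$; since $[\alpha]\in H$ and $H$ is a subgroup, $[\alpha][\delta]\in H$. Thus each point of $H$ has a basic whisker-open neighborhood contained in $H$, so $H$ is open in $\pi_1^{wh}(G,e_G)$.

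For $(ii)\Rightarrow(iii)$ I would run this in reverse, focusing on the identity element $[C_{e_G}]$, which lies in $H$ because $H$ is a subgroup. Since $H$ is whisker-open it contains a basic neighborhood $[\alpha]i_*\pi_1(U,e_G)$ of $[C_{e_G}]$ for some open $U\ni e_G$. From $[C_{e_G}]\in[\alpha]i_*\pi_1(U,e_G)$ one gets $[\alpha]^{-1}\in i_*\pi_1(U,e_G)$, hence $[\alpha]\in i_*\pi_1(U,e_G)$, so this basic set collapses to $i_*\pi_1(U,e_G)$ itself. Therefore $i_*\pi_1(U,e_G)\subseteq H$, and since this image is the homomorphic image of $\pi_1(U,e_G)$ and hence a subgroup, we obtain $i_*\pi_1(U,e_G)\leqslant H$, which is $(iii)$.

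I do not expect a genuine obstacle here: the real content is concentrated in Corollary 2.6 (and behind it the strong SLT theorem), and once that coincidence of topologies is available the remaining implications are immediate manipulations with the whisker basis and the subgroup axioms. The one point to state carefully is that openness is tested pointwise against the basis, so for the subgroup $H$ it suffices to control a single basic neighborhood of the identity, which then translates by left multiplication to a basic neighborhood inside $H$ at every $[\alpha]\in H$.
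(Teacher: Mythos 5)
Your proposal is correct and follows the same decomposition as the paper: $(i)\Leftrightarrow(ii)$ via Corollary 2.6, and $(ii)\Leftrightarrow(iii)$ by examining basic whisker neighborhoods, with your $(ii)\Rightarrow(iii)$ matching the paper's (terse) appeal to the whisker basis at the identity. The only difference is in $(iii)\Rightarrow(ii)$: the paper quotes the fact that $\pi_{1}^{wh}(G,e_G)$ is a topological group to conclude that a subgroup containing an open subgroup is open, whereas you carry out the translation by hand, covering $H$ by the basic sets $[\alpha]i_{*}\pi_{1}(U,e_G)$ for $[\alpha]\in H$ --- a slightly more self-contained version of the same step.
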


\begin{proof}
$(i) \Leftrightarrow (ii)$ follows  from Corollary 2.6.\

$(ii) \Rightarrow (iii):$ Let $H$ be an open subgroup of $\pi_{1}^{wh}(G,e_G)$. Since $i_{*}\pi_{1}(V,e_G)$ is an open basis in $\pi_{1}^{wh}(G,e_G)$, then there is a neighborhood $U$ of $e_{G}$ s.t. $i_{*}\pi_{1}(U,e_G) \leqslant H $.\

$(iii) \Rightarrow (ii):$ Let there is a neighborhood $U$ of $e_{G}$ s.t. $i_{*}\pi_{1}(U,e_G) \leqslant H $. Since $i_{*}\pi_{1}(U,e_G)$ is an open set in $\pi_{1}^{wh}(G,e_G)$ and $i_{*}\pi_{1}(U,e_G) \leqslant H $ and $\pi_{1}^{wh}(G,e_G)$ is a topological group, Hence $H$ is an open subgroup of $\pi_{1}^{wh}(G,e_G)$.
\end{proof}

Pashaei et al. introduced a relative version of small loop transfer as follows.
%(\cite[Lemma 5.10]{3}).
\begin{definition}(\cite{Pasha1}).
Let $H\leq \pi_{1}(X,x_{0})$. A topological space $X$ is called an $H$-small loop transfer ($H$-SLT for short) space at $x_{0}$ if for every path $ \alpha $ in $X$ with $\alpha(0)=x_{0}$  and for every neighborhood $U$ of $x_{0}$ there is a neighborhood $V$ of $\alpha(1)=x$ such that for every loop $\beta$ in $V$ based at $x$ there is a loop $\gamma$ in $U$ based at $x_{0}$ such that  $[\alpha\ast\beta\ast\bar{\alpha}\ast\bar{\gamma}] \in H$.
\end{definition}

It is easy to see that every SLT space at $x_{0}$  is an $H$-SLT space at $x_{0}$, for any subgroup $H$ of $\pi_{1}(X,x_{0})$, therefore every topological group $G$ is an $H$-SLT space at $e_G$, for any subgroup $H$ of $\pi_{1}(G,e_G)$.

\begin{proposition}
Let $G$ connected and locally path connected topological group and $H \leqslant \pi_{1}(G,e_G)$. Then the following statment are equivalent.

(i) $H$ is an closed subgroup of $\pi_{1}^{qtop}(G,e_G)$.\

(ii) $H$ is an closed subgroup of $\pi_{1}^{wh}(G,e_G)$.\

(iii) $G$ is homotopically Hausdorff relative to $H$.\

(iv) $G$ is homotopically path Hausdorff relative to $H$.\

\end{proposition}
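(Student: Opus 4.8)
The plan is to establish the chain $(i)\Leftrightarrow(ii)\Leftrightarrow(iii)$ together with $(iii)\Leftrightarrow(iv)$, using throughout that $\pi_1(G,e_G)$ is abelian (Lemma 2.2 identifies the pointwise product with the concatenation product, so the Eckmann--Hilton argument applies, and hence left and right cosets of $H$ coincide) and that each translation $y\mapsto ay$ is a self-homeomorphism of $G$. The equivalence $(i)\Leftrightarrow(ii)$ is immediate: by Corollary 2.6 the two topologies on the set $\pi_1(G,e_G)$ are equal, so a subset is closed for one exactly when it is closed for the other.

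For $(ii)\Leftrightarrow(iii)$ I would reduce both sides to a statement about the subgroups $N_V:=i_*\pi_1(V,e_G)$ meeting cosets of $H$. Closedness of $H$ in $\pi_1^{wh}(G,e_G)$ means that every $[\beta]\notin H$ has a basic whisker neighbourhood $[\beta]\,i_*\pi_1(U,e_G)$ disjoint from $H$; since $[\beta][\delta]\in H$ iff $[\delta]\in[\beta]^{-1}H$, this says that for some open $U\ni e_G$
$$ N_U\cap[\beta]^{-1}H=\emptyset. $$
On the other side, given a path $\alpha$ from $e_G$ to $x$ and a loop $\lambda$ at $x$ lying in a neighbourhood $U_g$ of $x$, its translate $\delta(t)=x^{-1}\lambda(t)$ is a loop at $e_G$ inside $V:=x^{-1}U_g$, and the computation in the proof of Theorem 2.4 gives $[\alpha\ast\lambda\ast\bar\alpha]=[\delta]$, independent of $\alpha$. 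As $\lambda$ ranges over loops at $x$ in $U_g$, the class $[\alpha\ast\lambda\ast\bar\alpha]$ ranges over exactly $N_V$, so ``homotopically Hausdorff relative to $H$'' is equivalent to: for every $g\notin H$ there is a neighbourhood $V\ni e_G$ with $N_V\cap Hg=\emptyset$. Abelianness makes $[\beta]^{-1}H$ and $Hg$ cosets of the same kind, and the substitution $g=[\beta]^{-1}$ (which ranges over $\pi_1(G,e_G)\setminus H$ as $[\beta]$ does) matches the two conditions.

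It remains to prove $(iii)\Leftrightarrow(iv)$. For $(iv)\Rightarrow(iii)$ I would argue by contraposition: if $(iii)$ fails, with witnesses $x$, a path $\alpha$ from $e_G$ to $x$ and $g\notin H$, pick a loop $\mu$ at $x$ with $[\alpha\ast\mu\ast\bar\alpha]\in Hg$ and consider the pair $\alpha,\ \alpha\ast\mu$, for which $[\alpha\ast\overline{\alpha\ast\mu}]\notin H$. No partition-and-tube can separate this pair, because the failure of $(iii)$ supplies, in the last cell, a loop $\lambda$ at $x$ with $[\alpha\ast\lambda\ast\bar\alpha]\in Hg$; inserting $\lambda$ as a terminal excursion yields a competitor $\gamma$ with $[\gamma\ast\overline{\alpha\ast\mu}]\in (Hg)(Hg)^{-1}=H$, defeating $(iv)$. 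The substantive direction is $(iii)\Rightarrow(iv)$. Given $\alpha,\beta\in P(G,e_G)$ with $\alpha(1)=\beta(1)=x$ and $g:=[\alpha\ast\bar\beta]\notin H$, I would cover $\alpha(I)$ by path-connected open sets (using local path connectedness) and choose a partition $0=t_0<\cdots<t_n=1$ fine enough that each $\alpha([t_{i-1},t_i])$ lies in one such set $U_i$. For any competitor $\gamma$ with $\gamma([t_{i-1},t_i])\subseteq U_i$ and $\gamma(t_i)=\alpha(t_i)$, the class $[\gamma\ast\bar\alpha]$ is a product of transported small loops $[\,\alpha|_{[0,t_{i-1}]}\ast\ell_i\ast\overline{\alpha|_{[0,t_{i-1}]}}\,]$, where $\ell_i$ is a loop in $U_i$ based at $\alpha(t_{i-1})$; translating by $\alpha(t_{i-1})^{-1}$ and applying the strong SLT property of Theorem 2.4 rewrites each factor as the class of a loop at $e_G$ inside $\alpha(t_{i-1})^{-1}U_i$. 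Since $[\gamma\ast\bar\beta]=[\gamma\ast\bar\alpha]\,g$, the requirement $[\gamma\ast\bar\beta]\notin H$ becomes: the subgroup generated by these translated small-loop subgroups must avoid the coset $Hg^{-1}$.

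The main obstacle is precisely this last point. Each individual factor can be kept out of $Hg^{-1}$ by invoking $(iii)$ at the point $\alpha(t_{i-1})$, but I must control the \emph{product} of the corrections along the whole path, and smallness of the factors does not by itself bound the subgroup they generate. I expect to resolve this by a Spanier-group refinement, shrinking the cover $\{U_i\}$ simultaneously so that the generated subgroup still misses $Hg^{-1}$, where the abelianness of $\pi_1(G,e_G)$ and the uniform transport furnished by Theorem 2.4 are the essential tools. Alternatively, once $(i)$ is in hand one may deduce $(iv)$ from the closedness of $q^{-1}(H)$ in the compact--open loop space by a Lebesgue-number/tube extraction, which is the standard route relating homotopically path Hausdorffness to the compact--open quotient topology.
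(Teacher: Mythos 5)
Your route is genuinely different from the paper's, which proves this proposition almost entirely by citation: $(i)\Leftrightarrow(ii)$ from Corollary 2.6, $(iii)\Leftrightarrow(iv)$ from Theorem 2.5 of Pashaei et al.\ (invoking that $G$ is $H$-SLT at $e_G$ and that $H$ is normal since $\pi_1(G,e_G)$ is abelian), and $(iv)\Leftrightarrow(i)$ from Lemma 9 of Brazas--Fabel. Your direct arguments for $(i)\Leftrightarrow(ii)$, for $(ii)\Leftrightarrow(iii)$ via the reduction of both conditions to ``for every $g\notin H$ there is $V\ni e_G$ with $i_*\pi_1(V,e_G)\cap Hg=\emptyset$'' (using the translation computation from Theorem 2.4 to make $[\alpha\ast\lambda\ast\bar\alpha]$ independent of $\alpha$), and for $(iv)\Rightarrow(iii)$ by contraposition are all correct, and they are more self-contained and more informative than the paper's proof.

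The one genuine gap is $(iii)\Rightarrow(iv)$: your main plan (control the product of transported small corrections by a Spanier-group refinement) is exactly the hard content and is not carried out; smallness of the individual factors does not by itself keep their product out of a fixed coset, and you acknowledge as much. However, you do not need that implication at all: your parenthetical ``alternative'' closes the cycle $(iv)\Rightarrow(iii)\Leftrightarrow(ii)\Leftrightarrow(i)\Rightarrow(iv)$, and the implication $(i)\Rightarrow(iv)$ is elementary. Indeed, if $H$ is closed in $\pi_{1}^{qtop}(G,e_G)$ then $q^{-1}\bigl(\pi_1(G,e_G)\setminus H\bigr)$ is open in $\Omega(G,e_G)$; given $\alpha,\beta$ with $[\alpha\ast\bar\beta]\notin H$, choose a basic compact--open neighbourhood $\bigcap_j\langle [s_{j-1},s_j],W_j\rangle$ of the loop $\alpha\ast\bar\beta$ inside it whose partition refines $\{0,\tfrac12,1\}$ (Lebesgue number), and let the $U_i$ be the $W_j$ covering the first half. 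Any competitor $\gamma$ with $\gamma([t_{i-1},t_i])\subseteq U_i$ keeps $\gamma\ast\bar\beta$ inside that basic neighbourhood (the second-half constraints are met because $\bar\beta$ is unchanged), so $[\gamma\ast\bar\beta]\notin H$; the conditions $\gamma(t_i)=\alpha(t_i)$ are not even needed, and no group structure is used. Promote that argument from an afterthought to the actual proof, drop the attempted direct $(iii)\Rightarrow(iv)$, and your write-up is complete.
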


\begin{proof}
$(i) \Leftrightarrow (ii)$ follows  from corollary 2.6.\\
$(iii) \Leftrightarrow (iv)$ follows  from \cite[Theorem 2.5]{Pasha1} since $G$ is an $H$-SLT at $e_G$ and $\pi_{1}(G,e_G)$ is abelian, so $H$ is a normal subgroup of$\pi_{1}(G,e_G)$.\\
$(iv) \Leftrightarrow (i)$ follows  from \cite[Lemma 9]{BrazFa}.
\end{proof}

\begin{corollary}
A connected locally path connected topological group $G$ is homotopically Hausdorff if and only if $\pi_{1}^{qtop}(G,e_G)$ is a Hausdorff space.
\end{corollary}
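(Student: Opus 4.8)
The plan is to reduce the statement to the case $H = \{1\}$ of Proposition 2.10 and then invoke the standard fact that a topological group is Hausdorff precisely when its identity element forms a closed set. First I would recall from the introduction that, by definition, $G$ is homotopically Hausdorff if and only if $G$ is homotopically Hausdorff relative to the trivial subgroup $H = \{1\}$ of $\pi_1(G, e_G)$. This identifies the hypothesis ``$G$ is homotopically Hausdorff'' with statement (iii) of Proposition 2.10 taken at $H = \{1\}$.

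Next, applying Proposition 2.10 with $H = \{1\}$, the equivalence (iii) $\Leftrightarrow$ (i) gives that $G$ is homotopically Hausdorff relative to $\{1\}$ if and only if the trivial subgroup $\{1\}$ is a closed subgroup of $\pi_{1}^{qtop}(G, e_G)$. So it remains only to show that $\{1\}$ is closed in $\pi_{1}^{qtop}(G, e_G)$ if and only if $\pi_{1}^{qtop}(G, e_G)$ is a Hausdorff space. For this last step I would use Corollary 2.6, which guarantees that, for $G$ connected and locally path connected, $\pi_{1}^{qtop}(G, e_G)$ is a topological group. In any topological group $T$ the singleton $\{e_T\}$ is closed if and only if $T$ is Hausdorff: the forward direction follows by translating an open neighborhood of $e_T$ (disjoint from a chosen nonidentity point) so as to separate arbitrary distinct points, using continuity of multiplication and inversion, while the reverse implication is immediate since a Hausdorff space is $T_1$ and hence has closed points. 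Thus ``$\{1\}$ closed in $\pi_{1}^{qtop}(G, e_G)$'' is equivalent to ``$\pi_{1}^{qtop}(G, e_G)$ is Hausdorff'', completing the chain of equivalences.

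Since every step is a direct appeal to a previously established result, I do not expect a serious obstacle here; the entire content of the corollary is the assembly of the definition of homotopically Hausdorff, Proposition 2.10, and Corollary 2.6. The only point requiring a little care is confirming that $\pi_{1}^{qtop}(G, e_G)$ really is a topological group, so that the ``identity closed $\Leftrightarrow$ Hausdorff'' dichotomy for topological groups actually applies --- and this is exactly what Corollary 2.6 supplies under the connectedness and local path connectedness hypotheses.
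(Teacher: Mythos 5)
Your proposal is correct and follows essentially the same route as the paper: reduce to the case $H=\{1\}$ of the closed-subgroup equivalence (Proposition 2.9 in the paper's numbering), then use that $\pi_{1}^{qtop}(G,e_G)$ is a topological group (Corollary 2.6) to pass between ``$\{1\}$ closed'' and ``Hausdorff''. The only cosmetic difference is that the paper spells out the last step via closedness of all singletons (using the quasitopological group structure) and the $T_0$-implies-Hausdorff property of topological groups, whereas you invoke the standard ``identity closed iff Hausdorff'' fact directly.
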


\begin{proof}
Assume that $G$ is homotopically Hausdorff. So $G$ is homotopically Hausdorff relative to the trivial subgroup $H = \lbrace 1\rbrace$. Hence by Proposition 2.9 $\{ e_G \} $ is closed in $\pi_{1}^{qtop}(G,e_G)$. Therefore for every $g \in G$, $\{ g \} $ is closed in $\pi_{1}^{qtop}(G,e_G)$ since $\pi_{1}^{qtop}(G,e_G)$ is a quasitopological group. Hence $\pi_{1}^{qtop}(G,e_G)$ is $T_0$, which implies that it is a Hausdorff space since $\pi_{1}^{qtop}(G,e_G)$ is a topological group. The converse is trivial.
\end{proof}

\begin{theorem}
A topological group $G$ is a strong SLT space if $G$ is an abelian group or a path connected space.
\end{theorem}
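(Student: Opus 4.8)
The plan is to promote Theorem~2.4, which furnishes the strong SLT property only at the identity $e_G$, to an arbitrary base point by exploiting the homogeneity of $G$. For each $g\in G$ the left translation $L_g\colon G\to G$, $x\mapsto gx$, is a homeomorphism (with inverse $L_{g^{-1}}$) carrying $e_G$ to $g$. The crucial observation is that being a strong SLT space \emph{at a point} is a topological invariant: the defining condition is phrased entirely in terms of points, open neighbourhoods, paths, loops and homotopies relative to $\dot{I}$, each of which is preserved and reflected by a homeomorphism. Hence, for any homeomorphism $\phi$ with $\phi(x_0)=y_0$, the space is strong SLT at $x_0$ if and only if it is strong SLT at $y_0$.

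First I would isolate this transport principle as a lemma, verifying it by pushing each ingredient of the definition through $\phi$: a neighbourhood $U$ of $y_0$ corresponds to $\phi^{-1}(U)$, the point $y$ to $\phi^{-1}(y)$, a path $\alpha$ from $y_0$ to $y$ to $\phi^{-1}\circ\alpha$, a loop $\beta$ to $\phi^{-1}\circ\beta$, and a relative homotopy $\gamma\simeq\alpha\ast\beta\ast\bar{\alpha}$ to its image under $\phi$, which is again a relative homotopy because $\phi$ is a homeomorphism. Applying this with $\phi=L_{x_0}$ and invoking Theorem~2.4 then shows that $G$ is strong SLT at $x_0=L_{x_0}(e_G)$; since $x_0$ is arbitrary, $G$ is a strong SLT space. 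I would also note that this reasoning uses neither commutativity nor path-connectedness, so in fact \emph{every} topological group is a strong SLT space and the two hypotheses in the statement are sufficient but not necessary.

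The step I expect to require the most care — the main obstacle, such as it is — is the bookkeeping of the quantifier alternation $\forall x\,\forall U\,\exists V\,\forall\alpha\,\forall\beta\,\exists\gamma$ under transport: one must confirm that the neighbourhood produced on the target side, namely $\phi(V')$, depends only on $x$ and $U$ through $\phi$ and not on the later-quantified $\alpha$ and $\beta$, which is immediate from the construction. Should one instead try to imitate the explicit computation of Theorem~2.4 directly at $x_0$, taking the auxiliary loops based at $x_0$, the pointwise product would acquire spurious factors of $x_0$ and would no longer equal $\alpha\ast\beta\ast\bar{\alpha}$; this is presumably where the abelian or path-connected hypothesis would be invoked to realign the computation via Lemma~2.2. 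The homogeneity argument sidesteps this difficulty entirely, which is why I would adopt it.
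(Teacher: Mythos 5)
Your transport argument is correct, and it is genuinely different from the paper's proof. The paper argues computationally: for abelian $G$ it redoes the pointwise-product construction of Theorem~2.4 with both auxiliary loops based at $a$, uses Lemma~2.2 in the form $[f^a][{}^a g]=[f.g]$, and invokes commutativity exactly to kill the ``spurious factors'' you anticipate; for path connected $G$ it picks a path $\lambda$ from $e_G$ to $a$ and conjugates, reducing the claim at $a$ to the already-proved claim at $e_G$ along $\lambda$. Your homogeneity lemma --- that strong SLT at a point is preserved under any self-homeomorphism, applied to $L_{x_0}$ --- accomplishes the same reduction without needing a connecting path, and the quantifier bookkeeping you flag does go through: the neighbourhood $\phi(V')$ depends only on $x$ and $U$, since $V'$ did. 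The payoff is real: your argument proves the stronger statement that \emph{every} topological group is a strong SLT space, so the abelian/path-connected hypotheses of the theorem as stated are indeed superfluous. (In effect, the paper's path-connected case is a special instance of your transport principle, implemented with the path-conjugation homeomorphism of fundamental groupoids rather than with $L_a$; the abelian case the paper proves separately is subsumed entirely.) The only thing to be careful about when writing this up is to state and prove the transport lemma explicitly, including the observation that applying $\phi$ to a homotopy relative to $\dot{I}$ again yields a homotopy relative to $\dot{I}$, which you have already noted.
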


\begin{proof}
Let $G$ be an abelian topological group and $a \in G$. We show that $G$ is a strong SLT space at $a$. For this let $U$ be a neighborhood of $a$ in $G$ and $b \in G$. We show that for every loop $\beta$ based at $b$ in the neighborhood $ba^{-1}U = \{ ba^{-1}u | u \in U \}$ of $b$ and every path $ \alpha $ in $G$ with $\alpha(0)=a , \alpha(1) = b$, there is a loop $\gamma$ in $U$ based at $a$  which is homotopic to $ \alpha\ast\beta\ast\bar{\alpha}$ relative to $\dot{I}$. Let $f$ be a loop in $G$ based at $a$ such that
\begin{displaymath}
f(t)= \left\{
\begin{array}{lr}
\alpha(3t)                     &       0\leq t\leq 1/3 \\
b     &      1/3\leq t\leq 2/3\\
\bar{\alpha}(3t-2)              &  2/3\leq t\leq 1.
\end{array}
\right.
\end{displaymath}
Also let $g$ be a loop in $G$ based at $a$ such that
\begin{displaymath}
g(t)= \left\{
\begin{array}{lr}
a                    &       0\leq t\leq 1/3 \\
ab^{-1}\beta(3t-1)     &      1/3\leq t\leq 2/3\\
a             &  2/3\leq t\leq 1.
\end{array}
\right.
\end{displaymath}
Therefore by Lemma 2.2 we have $[f^a][^ag] = [f.g]$. Note that
\begin{displaymath}
(f.g)(t)= \left\{
\begin{array}{lr}
\alpha^a(3t)                     &       0\leq t\leq 1/3 \\
bab^{-1}\beta(3t-1)     &      1/3\leq t\leq 2/3\\
\bar{\alpha}^a(3t-2)             &  2/3\leq t\leq 1.
\end{array}
\right.
\end{displaymath}
Since $G$ is abelian, hence $bab^{-1} = a$, $ \alpha ^a = ^a\alpha $ and $ \bar{\alpha} ^a = ^a\bar{\alpha}$. Therefore 
$$f.g = (^a\alpha)\ast(^a\beta)\ast (^a\bar{\alpha}) = ^a(\alpha\ast\beta\ast\bar{\alpha}).$$
Since $G$ is abelian, so $ [^af] = [f^a]$. Hence
$$ [^a(f\ast g)] = [(^af)\ast (^ag)] = [^af] [^ag] = [f^a][^ag] = [f.g] = [^a(\alpha\ast\beta\ast\bar{\alpha})]. $$
Therefore $[f \ast g] = [(\alpha\ast\beta\ast\bar{\alpha})]$.
If $\gamma = ^{ab^{-1}}\beta$, then $\gamma$ is a loop in $U$ based at $a$ since $\beta$ is a loop in $ba^{-1}U$ based at $b$. Since $[f] = [\alpha * \bar{\alpha}]$, we have
$$[\gamma] = [C_a][\gamma]= [\alpha * \bar{\alpha}][\gamma] = [f][g] = [(\alpha\ast\beta\ast\bar{\alpha})] $$
Hence $G$ is a strong SLT space at $a$.\\
 Now let $G$ be a path connected topological group and $a \in G$. We show that $G$ is a strong SLT space at $a$. For this let $U$ be a neighborhood of $a$ in $G$ and $b \in G$. We show that for every loop $\beta$ based at $b$ in the neighborhood $ba^{-1}U = \{ ba^{-1}u | u \in U \}$ of $b$ and every path $ \alpha $ in $G$ with $\alpha(0)=a , \alpha(1) = b$, there is a loop $\gamma$ in $U$ based at $a$  which is homotopic to $ \alpha\ast\beta\ast\bar{\alpha}$ relative to $\dot{I}$. Since $G$ is path connected so there is a path $\lambda$ in $G$ from $e_G$ to $a$. By proof of Theorem 2.4, we have
$$[\lambda\ast\alpha\ast\beta\ast\bar{\alpha}\ast\bar{\lambda}] = [(\lambda\ast\alpha)\ast\beta\ast\overline{(\lambda\ast\alpha)}]
= [(\lambda\ast\alpha)\ast\overline{(\lambda\ast\alpha)}][^{b^{-1}}\beta] = [C_{e_G}][^{b^{-1}}\beta] = [^{b^{-1}}\beta]. $$
Also $$[\lambda\ast(^{ab^{-1}}\beta)\ast\bar{\lambda}]  
= [\lambda\ast\bar\lambda][^{b^{-1}}\beta] = [C_{e_G}][^{b^{-1}}\beta] = [^{b^{-1}}\beta]. $$
Therefore $[\lambda\ast\alpha\ast\beta\ast\bar{\alpha}\ast\bar{\lambda}] = [\lambda\ast(^{ab^{-1}}\beta)\ast\bar{\lambda}]$, which implies that
$[\alpha\ast\beta\ast\bar{\alpha}] = [^{ab^{-1}}\beta]$. If $\gamma = ^{ab^{-1}}\beta$, then $\gamma$ is a loop in $U$ based at $a$ and $[\alpha\ast\beta\ast\bar{\alpha}] = [\gamma]$. Hence $G$ is a strong SLT space at $a$.

\end{proof}

\begin{corollary}
A topological group $G$ is an SLT space if $G$ is an abelian group or a path connected space.
\end{corollary}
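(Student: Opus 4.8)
The plan is to obtain this as an immediate consequence of Theorem 2.12 together with the elementary fact, already recorded just after Theorem 2.4, that a strong SLT space at $x_{0}$ is automatically an SLT space at $x_{0}$. So the substantive content is entirely contained in Theorem 2.12, and the corollary reduces to quoting that theorem and passing from the strong condition to the weak one.

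First I would unpack the two definitions at a fixed basepoint $x_{0}$ to make the implication transparent. The SLT condition at $x_{0}$ fixes a path $\alpha$ with $\alpha(0)=x_{0}$ and a neighborhood $U$ of $x_{0}$ \emph{first}, and only then demands a neighborhood $V$ of $x=\alpha(1)$ with the loop-transfer property; thus $V$ is allowed to depend on $\alpha$. The strong SLT condition at $x_{0}$ instead fixes only $U$ and the endpoint $x$, and produces a single neighborhood $V$ of $x$ that works simultaneously for \emph{every} path $\alpha$ from $x_{0}$ to $x$ and every loop $\beta$ in $V$ based at $x$. Hence the strong SLT condition is a uniform strengthening of the SLT condition.

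The key (and only) step is then to observe that the neighborhood $V$ delivered by the strong SLT property already meets the weaker SLT requirement. Given any path $\alpha$ ending at $x$ and any neighborhood $U$ of $x_{0}$, I would choose $V$ as in the strong SLT condition for the pair $(x,U)$; then for every loop $\beta$ in $V$ based at $x$ there is a loop $\gamma$ in $U$ based at $x_{0}$ with $\gamma \simeq \alpha\ast\beta\ast\bar{\alpha}$ relative to $\dot{I}$, which is exactly what SLT at $x_{0}$ asks for. Applying this at every point of $G$ shows that a strong SLT space is an SLT space.

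Finally I would invoke Theorem 2.12: if $G$ is abelian or path connected then $G$ is a strong SLT space, whence by the implication just established $G$ is an SLT space. I expect no genuine obstacle here, since the geometric work (the explicit loops $f$, $g$ and the computation $[\alpha\ast\beta\ast\bar{\alpha}]=[\,^{ab^{-1}}\beta\,]$) was already carried out in Theorem 2.12; the corollary is a formal dilution of that statement.
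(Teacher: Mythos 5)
Your proposal is correct and matches the paper's (implicit) argument exactly: the corollary is deduced from the preceding theorem (that an abelian or path connected topological group is a strong SLT space) together with the observation, recorded in the paper right after Theorem 2.4, that strong SLT at a point implies SLT at that point; your unpacking of the quantifier order makes that implication explicit. The only nitpick is a labeling slip --- the theorem you are invoking is Theorem 2.11, not 2.12.
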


 Brodskiy et al. \cite{BroU} proved that if a path connected space $X$ is a strong SLT space, then the fundamental group of $X$ with the whisker topology is equal to the fundamental group of $X$ with the lasso topology, which is always a topological group \cite{BroU1}. Therefore by Theorem 2.11 we have the following corollary.
\begin{corollary}
Let $G$ be a path connected topological group, then $ \pi_{1}^{wh}(G,e_G) $ is  a topological group.
\end{corollary}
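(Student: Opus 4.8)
The plan is to obtain this corollary by chaining together Theorem 2.11 with the two cited results of Brodskiy et al., so that no new construction is required. Since the statement concerns a path connected topological group $G$, the first step is to observe that the hypotheses needed to trigger these external results are already in place. Specifically, Theorem 2.11 asserts that a topological group which is path connected is a strong SLT space; applying it to our $G$ immediately yields that $G$ is a strong SLT space at every point.

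With this established, the second step is to invoke the theorem of Brodskiy et al. \cite{BroU}: for a path connected space $X$ that is a strong SLT space, the fundamental group carrying the whisker topology coincides, as a topological space, with the fundamental group carrying the lasso topology. Since $G$ is path connected by hypothesis and is a strong SLT space by the first step, both assumptions of this result are satisfied, and hence $\pi_{1}^{wh}(G,e_G)$ agrees, as a topological space, with the fundamental group of $G$ endowed with the lasso topology.

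The final step is to appeal to the fact, recorded in \cite{BroU1}, that the fundamental group equipped with the lasso topology is always a topological group, independently of any assumption on the underlying space. Combining this with the homeomorphism of topologies from the previous step transfers the topological group structure back to $\pi_{1}^{wh}(G,e_G)$, which completes the argument.

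As for the main obstacle: because every ingredient is already available as a quoted result, there is no genuine mathematical difficulty to surmount here, and the corollary is essentially a bookkeeping consequence. The only point demanding care is verifying that the hypotheses match exactly, namely that $G$ is path connected (given) and is consequently a strong SLT space (via Theorem 2.11), so that both the identification of the whisker and lasso topologies and the topological group property of the lasso topology may be invoked without any gap.
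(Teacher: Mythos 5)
Your proposal is correct and follows exactly the route the paper takes: Theorem 2.11 gives that the path connected topological group $G$ is a strong SLT space, the cited result of Brodskiy et al. then identifies the whisker topology with the lasso topology, and the latter is always a topological group by the second cited reference. No meaningful difference from the paper's own (one-line) justification.
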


\section{Covering theory of topological groups}

\begin{definition}\cite[Definition 8.2]{L}
Let $G$ be a topological group. By a covering group of $G$, we mean a pair $( \widetilde{G}, p)$ composed of a topological group $\widetilde{G}$  and of a homomorphism $p$  of $\widetilde{G}$ into $G$ such that $( \widetilde{G}, p)$ is a covering space of $G$.
\end{definition}

\begin{example}
Every covering of $S^{1}$ is a covering group.

\end{example}

For a topological group $G$, the category of covering  groups $CTG(G)$ is the category whose
\\ $\maltese$ \ \ objects are covering  groups $( \widetilde{G}, p)$;
  \\  $\maltese$ \ \ morphism for two objects $p :\widetilde{G} \to G$ and $q :\widetilde{H} \to G$ is  a continuous homomorphism function $\varphi :\widetilde{G} \to \widetilde{H}$ such that $q \circ \varphi =p$.
\begin{definition}

Two covering  groups  $p :\widetilde{G} \to G$ and $q :\widetilde{H} \to G$  are equivalent if there exists a homomorphism  $\varphi :\widetilde{G} \to \widetilde{H}$ such that $\varphi$ is homeomorphism and $q \circ \varphi =p$.
\end{definition}

 Torabi et al. \cite[Theorem 3.7]{t} showed that for a connected, locally path connected space $X$, there
is a one to one correspondence between its equivalent classes of connected covering spaces and the conjugacy classes of those subgroups of fundamental group $\pi_1(X,x)$ which contain an open normal subgroup of $\pi_1^{qtop}(X,x)$. If $G$ is a topological group, $\pi_1(G, e_G)$ is abelian and therefore every subgroup of $\pi_1(G, e_G)$ is normal. So  every  open subgroup of $\pi_1(G, e_G)$ has an open normal subgroup. Therefore for a connected, locally path connected topological group $G$, there is a one to one correspondence between its equivalent classes of connected covering spaces and the open subgroups of $\pi_1^{qtop}(X,x)$. In this section we introduce a similar classification for covering groups of $G$.

\begin{theorem}\label{Lifting Criterion}(Lifting Criterion for topological Group).
 Let $\widetilde{G} $ and $ G$ be topological groups and Let $H $ be a connected and locally path connected topological group, and let homomorphism $f: (H, e_H) \to (G, e_G)$ be continuous. If $p :(\widetilde{G}, e_{\widetilde{G}}) \to (G, e_G)$ is a covering  group, then there exists a unique continuous homomorphism $\widetilde{f}: (H, e_H) \to (\widetilde{G}, e_{\widetilde{G}})$ (where $e_{\widetilde{G}} \in p^{-1} (e_G)$) lifting
$f$ (i.e $p(\widetilde{f}) = f$) if and only if $f_* (\pi_1 (H, e_H)) \subseteq  p_* (\pi_1 (\widetilde{G}, e_{\widetilde{G}}))$.
\end{theorem}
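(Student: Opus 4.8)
The plan is to reduce the group-theoretic lifting criterion to the classical topological one, then upgrade the resulting topological lift to a homomorphism by a uniqueness argument.
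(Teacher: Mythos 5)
Your one-line plan is exactly the strategy the paper follows: the forward implication is immediate, and for the converse one applies the classical lifting criterion (legitimate here because $H$ is connected and locally path connected) to obtain the unique continuous pointed lift $\widetilde{f}$ with $\widetilde{f}(e_H)=e_{\widetilde{G}}$, and then shows that this particular lift is automatically a homomorphism. So there is no divergence of approach to report.

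The problem is that your proposal stops precisely where the only nontrivial work begins: ``upgrade the lift to a homomorphism by a uniqueness argument'' names the missing step rather than performing it. To close the gap you must exhibit two lifts of the same map that agree at a point and invoke uniqueness of lifts. There are two standard ways to do this. (a) At the level of maps: $(h_1,h_2)\mapsto \widetilde{f}(h_1h_2)$ and $(h_1,h_2)\mapsto \widetilde{f}(h_1)\widetilde{f}(h_2)$ are both continuous lifts through $p$ of $(h_1,h_2)\mapsto f(h_1)f(h_2)=f(h_1h_2)$ --- the second is a lift only because $p$ is a homomorphism, which is part of the definition of a covering \emph{group} --- and both send $(e_H,e_H)$ to $e_{\widetilde{G}}$; since $H\times H$ is connected, they coincide. (b) At the level of paths, which is what the paper actually does: for paths $\lambda_i$ from $e_H$ to $h_i$, the pointwise product $\widetilde{\lambda_1}.\widetilde{\lambda_2}$ of the lifts of $f\circ\lambda_1$ and $f\circ\lambda_2$ is itself a lift of $f\circ(\lambda_1.\lambda_2)$ starting at $e_{\widetilde{G}}$ (again using that $p$ is a homomorphism), so its endpoint $\widetilde{f}(h_1)\widetilde{f}(h_2)$ must equal $\widetilde{f}(h_1h_2)$ by uniqueness of path lifting. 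Either version is fine, but one of them has to be written out; in particular, your writeup should make explicit that the argument depends on $\widetilde{G}$ already carrying a topological group structure with $p$ a homomorphism --- this is why the theorem is stated for covering groups and not for arbitrary covering spaces of $G$.
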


\begin{proof}
If there exists $\widetilde{f}$ such that $ p \circ \widetilde{f} = f$, then $f_* (\pi_1 (H, e_H)) \subseteq  p_* (\pi_1 (\widetilde{G}, e_{\widetilde{G}}))$.
Conversely, let $h \in H$ and let $\lambda: I \to H$ be a path from $e_H$ to $h$; thus $f \circ \lambda$ is a path from $f(e_H) = e_G$ to $f(h)$. Since $p$ is a cover, there is a unique path $\widetilde{\lambda}$ in $\widetilde{G}$ that lifts $f \circ \lambda$ with $\widetilde{\lambda}(0) = e_{\widetilde{G}}$.
We define $\widetilde{f}$  by $\widetilde{f}(h) = \widetilde{\lambda}(1)$. By \cite[Theorem 10.13]{r}, $\widetilde{f}$ is unique well-defined  continuous map. So it is enough to show that $\widetilde{f}$ is homomorphism. For this, consider $h_1, h_2 \in H$ we must show that
 $$\widetilde{f}(h_1h_2)= \widetilde{f}(h_1)\widetilde{f}(h_2).$$ Let $\lambda_i: I \to H $ be a path from $e_H$ to $h_i$ where $e_H$ is the identity element of $H$, thus $f \circ \lambda_i$ is a path from $f(e_H) = e_G$ to $f(h_i)$ where $i=1,2$ and  $e_G$ is the the identity element of $G$.
 By Lemma \ref{path}, there exists the path $\lambda_1.\lambda_2$  in $H$ with starting point $e_H$ and end point $h_1h_2$ and $f \circ (\lambda_1.\lambda_2)$ is a path in $G$ with starting point $f(e_H)= e_G$ and end point $f(h_1h_2)$. Since $f$ is homomorphism, $f(h_1h_2)= f(h_1)f(h_2)$.
 Since $p$ is a cover, there are  unique paths $\widetilde{\lambda_i}$ in $\widetilde{G}$ that lifts $f \circ \lambda_i$ such that $\widetilde{\lambda_i}(0) = e_{\widetilde{G}}$ and $\widetilde{\lambda_i}(1) = \widetilde{f}(h_i)$ where $i=1,2$. We show that the path $ \widetilde{\lambda_1}.\widetilde{\lambda_2}$ is lifting of $f \circ (\lambda_1.\lambda_2)$ with starting point $e_{\widetilde{G}}$ and end point $\widetilde{f}(h_1)\widetilde{f}(h_2)$. For this, consider for every $t \in I$, $$ (p \circ (\widetilde{\lambda_1}.\widetilde{\lambda_2}))(t)=p \circ (\widetilde{\lambda_1}.\widetilde{\lambda_2}(t))= p \circ (\widetilde{\lambda_1}(t)\widetilde{\lambda_2}(t))=  p( (\widetilde{\lambda_1}(t)\widetilde{\lambda_2}(t)))= $$$$=p(\widetilde{\lambda_1}(t))p  (\widetilde{\lambda_2}(t))= (f \circ \lambda_1)(t) (f \circ \lambda_2)(t)=f ( \lambda_1(t)) (f  \lambda_2(t))= $$$$= f ( \lambda_1(t) \lambda_2(t))=f( ( \lambda_1. \lambda_2)(t))=f \circ (\lambda_1.\lambda_2(t))=(f \circ (\lambda_1.\lambda_2))(t).$$ Since $\widetilde{f}$ is well-defined, the end point of  $\widetilde{\lambda_1}.\widetilde{\lambda_2}$ is $\widetilde{f}(h_1h_2)$. Thus  $\widetilde{f}(h_1h_2)= \widetilde{f}(h_1)\widetilde{f}(h_2).$

\end{proof}

\begin{corollary}\label{equivalent}

Let $G$ be locally path connected topological group, and let $e_G$ be the identity element of $G$ . Let $( \widetilde{G}, p)$ and
$( \widetilde{H}, q)$ be covering  groups of $G$, and let $e_ {\widetilde{G}}\in p^{-1 }(e_G) = ker(p)$ and  $e_{\widetilde{H}} \in q^{-1 }(e_G) = ker(q)$. Then $p :\widetilde{G} \to G$
and $q :\widetilde{H} \to G$ are equivalent if and only if $q_*(\pi_1(\widetilde{H},e_{\widetilde{H}})) = p_*(\pi_1(\widetilde{G},e_ {\widetilde{G}})).$

\end{corollary}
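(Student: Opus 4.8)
The plan is to derive this entirely from the Lifting Criterion (Theorem \ref{Lifting Criterion}) together with the uniqueness of lifts, exploiting the fact that a homomorphism of topological groups automatically sends $e_{\widetilde{G}}$ to $e_{\widetilde{H}}$. Because of this, the chosen base points are always respected, so no base-point ambiguity arises and the image subgroups coincide on the nose rather than merely up to conjugacy. Throughout I use that $e_{\widetilde{G}}$ and $e_{\widetilde{H}}$ are the identity elements, and that $\widetilde{G}$ and $\widetilde{H}$ are connected and locally path connected (local path connectedness is inherited from $G$ through the local homeomorphisms $p$ and $q$), which is what licenses invoking Theorem \ref{Lifting Criterion} with $\widetilde{G}$ and $\widetilde{H}$ as domains.

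For the \emph{only if} direction, suppose $p$ and $q$ are equivalent, witnessed by a homomorphism $\varphi :\widetilde{G} \to \widetilde{H}$ that is a homeomorphism with $q \circ \varphi = p$. Since $\varphi$ is a homomorphism, $\varphi(e_{\widetilde{G}}) = e_{\widetilde{H}}$, so $\varphi_* : \pi_1(\widetilde{G}, e_{\widetilde{G}}) \to \pi_1(\widetilde{H}, e_{\widetilde{H}})$ is a well-defined isomorphism (being induced by a homeomorphism of pointed spaces). Applying $\pi_1$ to $q \circ \varphi = p$ yields $p_* = q_* \circ \varphi_*$, and since $\varphi_*$ is surjective we obtain $p_*(\pi_1(\widetilde{G}, e_{\widetilde{G}})) = q_*(\varphi_*(\pi_1(\widetilde{G}, e_{\widetilde{G}}))) = q_*(\pi_1(\widetilde{H}, e_{\widetilde{H}}))$.

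For the \emph{if} direction, assume $q_*(\pi_1(\widetilde{H}, e_{\widetilde{H}})) = p_*(\pi_1(\widetilde{G}, e_{\widetilde{G}}))$. The inclusion $p_*(\pi_1(\widetilde{G}, e_{\widetilde{G}})) \subseteq q_*(\pi_1(\widetilde{H}, e_{\widetilde{H}}))$ lets me lift the homomorphism $p : \widetilde{G} \to G$ through the covering group $q :\widetilde{H} \to G$: by Theorem \ref{Lifting Criterion} there is a unique continuous homomorphism $\varphi : (\widetilde{G}, e_{\widetilde{G}}) \to (\widetilde{H}, e_{\widetilde{H}})$ with $q \circ \varphi = p$. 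Symmetrically, the reverse inclusion gives a unique continuous homomorphism $\psi : (\widetilde{H}, e_{\widetilde{H}}) \to (\widetilde{G}, e_{\widetilde{G}})$ with $p \circ \psi = q$. Crucially, Theorem \ref{Lifting Criterion} hands me the homomorphism property of $\varphi$ and $\psi$ for free, which is exactly the content that the classical (non-group) equivalence theorem lacks.

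It remains to show $\varphi$ and $\psi$ are mutually inverse. The composite $\psi \circ \varphi : \widetilde{G} \to \widetilde{G}$ is a continuous homomorphism satisfying $p \circ (\psi \circ \varphi) = (p \circ \psi) \circ \varphi = q \circ \varphi = p$ and $(\psi \circ \varphi)(e_{\widetilde{G}}) = \psi(e_{\widetilde{H}}) = e_{\widetilde{G}}$; but $\mathrm{id}_{\widetilde{G}}$ is also a continuous homomorphism lifting $p$ through the covering group $p$ and fixing $e_{\widetilde{G}}$, so the uniqueness clause of Theorem \ref{Lifting Criterion} forces $\psi \circ \varphi = \mathrm{id}_{\widetilde{G}}$, and likewise $\varphi \circ \psi = \mathrm{id}_{\widetilde{H}}$. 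Hence $\varphi$ is a homomorphism that is also a homeomorphism with $q \circ \varphi = p$, so by Definition \ref{equivalent} the covering groups $p$ and $q$ are equivalent. I expect the only real points requiring care to be verifying the hypotheses of Theorem \ref{Lifting Criterion} (connectedness and local path connectedness of the covers, plus that the chosen base points are the group identities) and noting that $\mathrm{id}_{\widetilde{G}}$ legitimately qualifies as the unique homomorphism lift, which is what pins the two composites down to the identities.
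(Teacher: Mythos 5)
Your proof is correct, and it diverges from the paper's in both directions of the equivalence, in each case in a way that is somewhat tighter. For the \emph{only if} direction the paper cites Rotman's Theorem 10.20 to conclude that $p_*(\pi_1(\widetilde{G},e_{\widetilde{G}}))$ and $q_*(\pi_1(\widetilde{H},e_{\widetilde{H}}))$ are \emph{conjugate} subgroups of $\pi_1(G,e_G)$, and then uses commutativity of $\pi_1(G,e_G)$ to upgrade conjugacy to equality; you instead observe that the equivalence $\varphi$ is a homomorphism and hence carries $e_{\widetilde{G}}$ to $e_{\widetilde{H}}$, so $\varphi_*$ is an isomorphism between the two fundamental groups at the distinguished base points and $p_*=q_*\circ\varphi_*$ gives equality on the nose --- no appeal to abelianness needed, and the argument would survive in a non-abelian setting. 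For the \emph{if} direction the paper applies the Lifting Criterion twice to obtain continuous homomorphisms $\varphi$ and $\phi$ with $q\circ\varphi=p$ and $p\circ\phi=q$ and then simply asserts equivalence; you supply the missing step, namely that $\psi\circ\varphi$ and $\varphi\circ\psi$ are lifts of $p$ and $q$ through themselves fixing the identities, so the uniqueness clause of the Lifting Criterion forces them to be the respective identity maps and $\varphi$ is indeed a homeomorphism as Definition 3.3 requires. Both your proof and the paper's implicitly use that $\widetilde{G}$ and $\widetilde{H}$ are connected and locally path connected when invoking the Lifting Criterion (the corollary's hypotheses do not state connectedness of the covers explicitly); you at least flag this, which the paper does not.
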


\begin{proof}
Assume that $p :\widetilde{G} \to G$ and $q :\widetilde{H} \to G$  are equivalent, and let $\varphi: \widetilde{G} \to \widetilde{H}$ be
a homeomorphism such that $\varphi$ is homomorphism and $q \circ \varphi =p$. Then By \cite[Theorem 10.20]{r}, $q_*(\pi_1(\widetilde{H}, e_{\widetilde{H}})) $ and $p_*(\pi_1(\widetilde{G},e_ {\widetilde{G}}))$ are conjugate subgroups of $\pi_1(G, e_G)$. Since $\pi_1(G, e_G)$ is abelian, $q_*(\pi_1(\widetilde{H}, e_{\widetilde{H}})) = p_*(\pi_1(\widetilde{G},e_ {\widetilde{G}}))$.
Conversely, assume that  $q_*(\pi_1(\widetilde{H}, e_{\widetilde{H}})) = p_*(\pi_1(\widetilde{G},e_ {\widetilde{G}}))$. By Theorem \ref{Lifting Criterion}, there exists continuous homomorphism $\varphi: \widetilde{G} \to \widetilde{H}$  such that $q \circ \varphi =p$ and also there exists continuous homomorphism $\phi: \widetilde{H} \to \widetilde{G}$  such that $p \circ \phi =q$. Therefore $p :\widetilde{G} \to G$  and $q :\widetilde{H} \to G$ are equivalent.
\end{proof}

We recall from \cite{t} that the spanier group $\pi(\mathcal{U}, e_G)$ with respect to the open cover $\mathcal{U}=\lbrace U_{i} \ \vert \ i\in{I}\rbrace$ of $G$ is defined to be the subgroup of $\pi_{1}(G,e_G)$ which contains all homotopy classes having representatives of the following type:

$$\prod_{j=1}^{n}\alpha_{j}\ast\beta_{j}\ast\bar{\alpha}_{j}$$

 where $\alpha_{j}$'s are arbitrary path starting at $e_G$ and each $\beta_{j}$ is a loop inside of the open sets $U_{j}\in{\mathcal{U}}$. We generalize Theorem 10.42 of \cite{r} as follows.

\begin{theorem} \label{exist}
Let $G$ be connected locally path-connected topological group, and let $e_G$ be the identity element of $G$. If $H$ be a subgroup of $\pi_1(G, e_G)$, then there exists a covering  group $p :\widetilde{G} \to G$ such that $p_*(\pi_1(\widetilde{G},e_{\widetilde{G}}) )=H$ if
and only if there is an open covering $\mathcal{U}$ of $G$ such that $\pi (\mathcal{U}, e_G)\leq H$.

\end{theorem}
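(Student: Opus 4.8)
The plan is to split the equivalence and, in each direction, reduce to the covering-space classification of Torabi et al.\ \cite[Theorem 3.7]{t}. The bridge is the standard fact (valid for connected locally path connected spaces and recorded in \cite{t}) that the Spanier group $\pi(\mathcal{U},e_G)$ is an open normal subgroup of $\pi_1^{qtop}(G,e_G)$, and that, conversely, every open normal subgroup contains such a Spanier group. Since $\pi_1(G,e_G)$ is abelian, ``open normal'' reduces to ``open'' and these assertions apply verbatim. The only content not already contained in \cite{t} is the promotion of a covering \emph{space} to a covering \emph{group}, which I would carry out by lifting the multiplication and inversion of $G$.

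For the forward implication, suppose a covering group $p:\widetilde{G}\to G$ with $p_*(\pi_1(\widetilde{G},e_{\widetilde{G}}))=H$ is given. Restricting to the path component of $e_{\widetilde{G}}$ (over which $p$ is still a covering onto the connected $G$, with unchanged characteristic subgroup $H$) and forgetting the group structure, \cite[Theorem 3.7]{t} shows that $H$ contains an open subgroup of $\pi_1^{qtop}(G,e_G)$; this open subgroup in turn contains some Spanier group, whence there is an open cover $\mathcal{U}$ of $G$ with $\pi(\mathcal{U},e_G)\leq H$.

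For the converse, assume $\pi(\mathcal{U},e_G)\leq H$ for an open cover $\mathcal{U}$. Then $H$ contains the open subgroup $\pi(\mathcal{U},e_G)$, so \cite[Theorem 3.7]{t} furnishes a connected covering \emph{space} $p:\widetilde{G}\to G$ with $p_*(\pi_1(\widetilde{G},e_{\widetilde{G}}))=H$, where $e_{\widetilde{G}}\in p^{-1}(e_G)$; note that $\widetilde{G}$ is again connected and locally path connected, hence so is $\widetilde{G}\times\widetilde{G}$. I would then define the multiplication $\widetilde{m}:\widetilde{G}\times\widetilde{G}\to\widetilde{G}$ as the base-point preserving lift through $p$ of $\mu:=m\circ(p\times p)$, and the inversion $\widetilde{\iota}:\widetilde{G}\to\widetilde{G}$ as the base-point preserving lift of $\iota\circ p$, where $m$ and $\iota$ denote the multiplication and inversion of $G$. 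Existence of $\widetilde{m}$ hinges on the lifting condition $\mu_*(\pi_1(\widetilde{G}\times\widetilde{G}))\subseteq p_*(\pi_1(\widetilde{G}))=H$: for loops $\widetilde{f},\widetilde{g}$ at $e_{\widetilde{G}}$, Lemma \ref{multi} gives $\mu_*[(\widetilde{f},\widetilde{g})]=[(p\widetilde{f}).(p\widetilde{g})]=[p\widetilde{f}][p\widetilde{g}]\in H$; the analogous condition for $\widetilde{\iota}$ uses $[\ell^{-1}]=[\ell]^{-1}$ in $\pi_1(G,e_G)$, itself a consequence of Lemma \ref{multi}. With $\widetilde{m}$ and $\widetilde{\iota}$ in hand, the group axioms follow from the uniqueness of base-point preserving lifts \cite[Theorem 10.13]{r}: in each axiom the two maps to be compared lift one and the same map into $G$ (built from the corresponding identity in $G$) and agree at the base point, hence coincide. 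Since $p\circ\widetilde{m}=\mu$ says exactly that $p$ is a homomorphism, $(\widetilde{G},p)$ is a covering group with $p_*(\pi_1(\widetilde{G}))=H$.

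I expect the crux to be this construction of the group structure: verifying the $\pi_1$-lifting hypothesis for $\mu$ (where Lemma \ref{multi} is precisely the needed tool) and arranging each group axiom so that it reduces to uniqueness of lifts. By contrast, the passage between open subgroups of $\pi_1^{qtop}(G,e_G)$ and Spanier groups, used in both directions, is routine given \cite{t} and the local path connectedness of $G$.
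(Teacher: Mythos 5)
Your proposal is correct, but the converse direction takes a genuinely different route from the paper. The paper does not obtain the covering space from the classification in \cite{t} and then lift the operations; instead it builds the covering group explicitly: it forms Spanier's space $\widetilde{G}_H$ of equivalence classes $\langle f\rangle_H$ of paths from $e_G$ (with $f\sim g$ iff $f(1)=g(1)$ and $[f\ast\bar{g}]\in H$), gets the covering map $p_H(\langle f\rangle_H)=f(1)$ from \cite[Theorem 2.13]{s}, defines the multiplication by hand as $\langle f\rangle_H\langle g\rangle_H=\langle f.g\rangle_H$ (pointwise product of paths), verifies well-definedness via Lemma \ref{multi} applied to $[(f.g)\ast(\bar{f'}.\bar{g'})]=[h_1.h_2]=[h_1][h_2]\in H$, checks the group axioms directly, and then checks continuity of multiplication and inversion on the basic neighborhoods $\langle f,U\rangle_H$ using continuity of the operations of $G$. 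Your approach instead invokes the existence of the covering space and constructs $\widetilde{m}$ and $\widetilde{\iota}$ as lifts of $m\circ(p\times p)$ and $\iota\circ p$, using Lemma \ref{multi} exactly where the paper uses it (to show the image of $\pi_1(\widetilde{G}\times\widetilde{G})$ lands in $H$), and gets continuity for free and the axioms from uniqueness of lifts; this is cleaner and closer in spirit to Rotman's Theorem 10.42, which the paper says it is generalizing, and it also subsumes the paper's separate Theorem 3.9. The trade-off is that the paper's explicit model $\widetilde{G}_H$ is reused elsewhere (e.g.\ in the proof of Theorem 3.9 and in exhibiting the whisker-type basis), whereas your argument leans on the equivalence between ``contains a Spanier group'' and ``contains an open normal subgroup of $\pi_1^{qtop}$'' from \cite{t} in both directions, where the paper's forward direction cites Spanier's Lemma 2.11 directly. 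Both arguments are sound; just make sure you record that $\widetilde{G}\times\widetilde{G}$ is connected and locally path connected before applying the lifting criterion, which you do.
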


\begin{proof}
If there is a covering  map $p :\widetilde{G} \to G$ such that $p_*(\pi_1(\widetilde{G},e_{\widetilde{G}}) )=H$,then there is an open covering $\mathcal{U}$ of $G$ such that $\pi (\mathcal{U}, e_G)\leq H$ ( see\cite[Lemma 2.11]{s}) and we know every covering group is a covering map.
Conversely, let $P(G, e_G)$ be the family of all paths $f$ in $G$ with $f(0) = e_G$. We Define $f \sim g$ by $f(1)=  g(1)$ and $ [f * \bar{g}] \in H$. The relation $f \sim g$ is an equivalence relation on $P(G, e_G)$ and equivalent class of $f$ will be denoted by $<f>_H$. Let $\widetilde{G}_H $ be the set of equivalence classes. By \cite[Theorem 2.13]{s}, there exists  a covering map $p_H :\widetilde{G}_H \to G$ such that $ p_H(<f>_H) = f(1)$ and $(p_H)_*(\pi_{1}(\widetilde{G}_{H},e_{\widetilde{G}_{H}} ))=H$. So it is enough to show that $p_H$ is homomorphism. For this, we define
$$<f>_H<g>_H = <f.g>_H.$$
We show that this multiplication is well-defined and $\widetilde{G}_{H}$ is a topological group. Consider $<f>_H= <f'>_H$ and $<g>_H = <g'>_H$, so there exists $ h_1, h_2 \in H$ such that $ [f * \bar{f'}]= [h_1] $ and $[g * \bar{g'}]=[h_2]$ and so there exists $F_1:I \times I \to G$ such that $F_1(t,0)= f * \bar{f'}(t)$ and $ F_1(t,1)=h_1(t)$ and there exists $F_2:I \times I \to G$ such that $F_2(t,0)= g * \bar{g'}(t)$ and $ F_2(t,1)=h_2(t)$. We define $F:= I \times I \to G \times G \to G$ by $ F(t,s)= m(F_1(t,s), F_2(t,s))$ where $m:G \times G \to G$ is the multiplication map of $G$ and $$F(t,0)= m((f * \bar{f'})(t),(g * \bar{g'})(t))= (f * \bar{f'})(t)(g * \bar{g'})(t)= ((f.g)*(\bar{f'}.\bar{g'}))(t)$$ and $ F(t,1)=(h_1.h_2)(t)$. Thus $[(f.g)*(\bar{f'}.\bar{g'})]= [h_1.h_2]$ and by Lemma \ref{multi}, $[h_1.h_2]=[h_1][h_2]$ . Since $ h_1, h_2 \in H$ and $H$ is a subgroup of $\pi_1(G, e_G)$, $$[(f.g)*(\bar{f'}.\bar{g'})]= [h_1.h_2]= [h_1][h_2] \in H.$$
Thus the  multiplication of  $\widetilde{G}_{H}$ is well-defined.
 This multiplication is associative since the operation of $G$ is associative.
 The identity element of $\widetilde{G}_{H}$ is $C_{e_G}:I \to G$ by $C_{e_G}(t)=e_G$. By the definition of multiplication of  $\widetilde{G}_{H}$, for every element $ <f>_H \in \widetilde{G}_{H}$, $<f.C_{e_G}>_H=<f>_H=< C_{e_G}.f>_H$. Also for every element $ <f>_H \in \widetilde{G}_{H}$, we define the inverse element of $<f>_H$ by $f^{-1}:I \to G$ by $ f^{-1}(t)= (f(t))^{-1}$ such that $<f.f^{-1}>_H=<C_{e_G}>_H=< f^{-1}
 .f>_H$. Thus $\widetilde{G}_{H}$ is a group.
 Now we show that $p_H$ is homomorphism.  $p_H( <f.g>_H)= p_H(<f>_H)p_H(<g>_H)$ since $p_H(<f>_H)= f(1)$, $p_H(<g>_H)= g(1)$ and $p_H( <f.g>_H)= f.g(1)= f(1)g(1)$. $\widetilde{G}_{H}$ is a topological group. $ \theta: \widetilde{G}_{H} \times \widetilde{G}_{H}\to G$ by $\theta(<f>_H,<g>_H)= <f.g>_H$ is the multiplication map of $\widetilde{G}_{H}$. we show that $\theta$ is continuous, consider $<f.g,U>_H$ is an open neighborhood of $<f.g>_H$, $f(1)g(1) \in U$ and $G$ is a topological group so there exist open neighborhoods $U_1$ of $f(1)$ and $U_2$ of $f(2)$ in $G$ such that $m(U_1,U_2)\subseteq U$. Thus there exists the open neighborhoods $<f,U_1>_H$ of $<f>_H$ and $<g,U_2>_H$ of $<g>_H$ such that $\theta ( <f,U_1>_H,g,U_2>_H) \subseteq <f.g,U>_H$. $\lambda:\widetilde{G}_{H}\to \widetilde{G}_{H}$ by $\lambda (<f>_H)= <f^{-1}>_H $ is the inverse map of $\widetilde{G}_{H}$. we show that $\lambda$ is continuous, consider $<f^{-1},U>_H$ is an open neighborhood of $<f^{-1}>_H$ and $G$ is a topological group so $U^{-1}$ is  an open neighborhood in $G$. Thus $<f, U^{-1}>_H $ is  an open neighborhood of  $<f>_H$ and $ \lambda(<f, U^{-1}>_H)\subseteq <f^{-1},U>_H$.  Therefore $\widetilde{G}_{H}$ is a topological group.
\end{proof}

Since a topological group $G$ is a strong SLT at $e_G$, so by definition of strong SLT, for any neighborhood $U$ of $e_{G}$ in topological group $G$, there is an open covering $\mathcal{U}$ such that $\pi (\mathcal{U}, e_G)\leq i_{*}\pi_{1}(U,e_G) $. Therefore we conclude the following Corollary.
\begin{corollary} 
Let $G$ be connected locally path-connected topological group, and let $e_G$ be the identity element of $G$. If $H$ be a subgroup of $\pi_1(G, e_G)$, then there exists a covering  group $p :\widetilde{G} \to G$ such that $p_*(\pi_1(\widetilde{G},e_{\widetilde{G}}) )=H$ if
and only if there is a neighborhood $U$ of $e_{G}$ s.t. $i_{*}\pi_{1}(U,e_G) \leqslant H $.
\end{corollary}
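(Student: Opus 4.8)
The plan is to obtain this corollary by feeding the strong small loop transfer property of $G$ into Theorem \ref{exist}, whose criterion is phrased in terms of the Spanier groups $\pi(\mathcal{U}, e_G)$. Thus the whole task reduces to showing that, for a connected locally path connected topological group, the condition ``$\pi(\mathcal{U}, e_G) \le H$ for some open cover $\mathcal{U}$ of $G$'' is equivalent to the condition ``$i_*\pi_1(U, e_G) \le H$ for some open neighborhood $U$ of $e_G$.'' Since Theorem \ref{exist} already identifies the first condition with the existence of the desired covering group $p:\widetilde{G}\to G$ with $p_*(\pi_1(\widetilde{G},e_{\widetilde{G}}))=H$, establishing this equivalence completes the argument.

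For the implication from the existence of a covering group to the existence of a neighborhood, I would start from Theorem \ref{exist}: a covering group with $p_*(\pi_1(\widetilde{G},e_{\widetilde{G}}))=H$ yields an open cover $\mathcal{U}$ of $G$ with $\pi(\mathcal{U},e_G)\le H$. Because $\mathcal{U}$ covers $G$, I would choose a member $U\in\mathcal{U}$ with $e_G\in U$ and take this $U$ as the required neighborhood. The point to verify is that every loop $\beta$ in $U$ based at $e_G$ lies in $\pi(\mathcal{U},e_G)$: indeed $\beta$ is homotopic to $C_{e_G}\ast\beta\ast\bar{C}_{e_G}$, which is exactly a generator of $\pi(\mathcal{U},e_G)$ obtained by taking the arbitrary starting path to be the constant path at $e_G$ and the loop inside $U\in\mathcal{U}$. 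Hence $i_*\pi_1(U,e_G)\le\pi(\mathcal{U},e_G)\le H$, as needed.

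For the converse, I would invoke that $G$ is a strong SLT space at $e_G$ (Theorem 2.4). As recorded in the paragraph preceding the corollary, this property lets one convert any open neighborhood $U$ of $e_G$ into an open cover $\mathcal{U}$ of $G$ with $\pi(\mathcal{U},e_G)\le i_*\pi_1(U,e_G)$. Assuming $i_*\pi_1(U,e_G)\le H$, this gives $\pi(\mathcal{U},e_G)\le H$, and Theorem \ref{exist} then produces the covering group. I expect the only genuinely substantive point to be this conversion of a neighborhood into a cover: there one covers each point $x$ by a strong SLT neighborhood $V_x$ (shrunk to be path connected using local path connectedness) and transfers an arbitrary generator $\alpha\ast\beta\ast\bar{\alpha}$, with $\beta$ based at $\alpha(1)\in V_x$, to a loop in $U$ by first routing $\beta$ back to the center $x$ along a path in $V_x$ and then applying the strong SLT property of Theorem 2.4. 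Since that fact is already established above, the corollary itself is a short combination of the two theorems, and I foresee no further obstacle.
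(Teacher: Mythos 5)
Your proposal is correct and follows essentially the same route as the paper: the paper also deduces the corollary from Theorem \ref{exist} together with the observation (stated in the paragraph immediately preceding the corollary) that strong SLT at $e_G$ converts any neighborhood $U$ of $e_G$ into an open cover $\mathcal{U}$ with $\pi(\mathcal{U},e_G)\leq i_{*}\pi_{1}(U,e_G)$. You merely spell out the two details the paper leaves implicit --- picking a member of $\mathcal{U}$ containing $e_G$ for the forward direction, and rerouting a Spanier generator's loop to the center of $V_x$ before applying strong SLT --- both of which are handled correctly.
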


By Corollary 3.5, Corollary3.7 and  Proposition 2.7 we conclude the following classification of covering groups.

\begin{corollary}
For a connected, locally path connected topological group $G$, there is a one to one correspondence between its equivalent classes of connected covering groups and the open subgroups of $\pi_1^{qtop}(X,x)$.
\end{corollary}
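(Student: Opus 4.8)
The plan is to exhibit an explicit correspondence $\Phi$ sending each connected covering group $(\widetilde{G},p)$ to the subgroup $p_*(\pi_1(\widetilde{G},e_{\widetilde{G}}))\leq\pi_1(G,e_G)$, where $e_{\widetilde{G}}$ is any chosen point of the fibre $p^{-1}(e_G)$, and then to verify that $\Phi$ descends to a bijection from the set of equivalence classes of connected covering groups onto the set of open subgroups of $\pi_1^{qtop}(G,e_G)$. First I would check that $\Phi$ is well defined on equivalence classes. Since $G$ is a topological group, $\pi_1(G,e_G)$ is abelian, so the subgroup $p_*(\pi_1(\widetilde{G},e_{\widetilde{G}}))$ is independent of the choice of basepoint in $p^{-1}(e_G)$: different choices give conjugate subgroups, and conjugate subgroups of an abelian group coincide. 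Moreover, if $(\widetilde{G},p)$ and $(\widetilde{H},q)$ are equivalent covering groups, then Corollary \ref{equivalent} gives $p_*(\pi_1(\widetilde{G},e_{\widetilde{G}}))=q_*(\pi_1(\widetilde{H},e_{\widetilde{H}}))$, so $\Phi$ is constant on each equivalence class.

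Injectivity of the induced map is then immediate from the converse direction of Corollary \ref{equivalent}: if two connected covering groups are assigned the same subgroup, they are equivalent. The remaining task is to identify the image of $\Phi$ precisely with the open subgroups. For the inclusion of the image into the open subgroups, take a connected covering group $(\widetilde{G},p)$ and set $H=p_*(\pi_1(\widetilde{G},e_{\widetilde{G}}))$; the \emph{only if} direction of Corollary 3.7 produces a neighborhood $U$ of $e_G$ with $i_*\pi_1(U,e_G)\leq H$, and then the implication $\mathrm{(iii)}\Rightarrow\mathrm{(i)}$ of Proposition 2.7 shows that $H$ is open in $\pi_1^{qtop}(G,e_G)$. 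For surjectivity onto the open subgroups, start from an arbitrary open subgroup $H$ of $\pi_1^{qtop}(G,e_G)$; the implication $\mathrm{(i)}\Rightarrow\mathrm{(iii)}$ of Proposition 2.7 furnishes a neighborhood $U$ with $i_*\pi_1(U,e_G)\leq H$, and the \emph{if} direction of Corollary 3.7 then constructs a covering group realizing $H$ as its associated subgroup. Combining these two steps shows the image of $\Phi$ is exactly the collection of open subgroups, so $\Phi$ is the desired bijection.

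The bookkeeping is largely routine once the three cited results are in hand, since each of the two directions of the biconditional in the statement is handed to us directly. The one point demanding genuine care, and the step I expect to be the main obstacle, is the basepoint-independence built into the definition of $\Phi$: this is precisely where the abelianness of $\pi_1(G,e_G)$ (equivalently, the normality of every subgroup, as already noted in the paragraph preceding Theorem \ref{exist}) is essential, for it collapses the conjugacy class of a covering's characteristic subgroup down to a single subgroup, so that the correspondence genuinely records "one subgroup per covering" rather than merely "one conjugacy class per covering." Without abelianness one would only obtain a correspondence with conjugacy classes of subgroups, as in the classical theory, so I would make sure this reduction is stated cleanly before asserting the bijection.
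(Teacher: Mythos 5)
Your proposal is correct and follows essentially the same route as the paper, which derives this corollary by chaining exactly the three results you invoke: Corollary 3.5 for well-definedness and injectivity, and Corollary 3.7 together with Proposition 2.7 to identify the image with the open subgroups. The paper leaves the bookkeeping implicit, so your explicit treatment of basepoint-independence via the abelianness of $\pi_1(G,e_G)$ is a welcome (and consistent) elaboration rather than a deviation.
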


\begin{theorem}
Every covering space $\widetilde{G}$ of a connected locally path connected topological group $G$ is a topological group. Furthermore the covering map $q :\widetilde{G} \to G$ is homomorphism.
\end{theorem}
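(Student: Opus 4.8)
The plan is to realize the given cover as an instance of the covering group constructed in Theorem \ref{exist}, and then transport the group structure across a base-point-preserving covering equivalence. Assume throughout that $\widetilde{G}$ is connected, which is the essential case. Fix $\tilde{e}\in q^{-1}(e_G)$ and set $H:=q_*(\pi_1(\widetilde{G},\tilde{e}))$. First I would argue that $H$ is an open subgroup of $\pi_1^{qtop}(G,e_G)$: since $q$ is a covering and $G$ is locally path connected, every point has an evenly covered path-connected neighborhood, so that loops inside a suitable path-connected neighborhood $U$ of $e_G$ lift to loops, giving $i_*\pi_1(U,e_G)\leq H$. By Proposition 2.7 this makes $H$ open, and in particular $H$ contains a Spanier subgroup $\pi(\mathcal{U},e_G)$ associated to the open cover $\mathcal{U}$ by evenly covered sets (cf. \cite[Lemma 2.11]{s}).

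Next, applying Theorem \ref{exist} to $H$ produces a covering group $p_H:\widetilde{G}_H\to G$ which is a topological group and satisfies $(p_H)_*(\pi_1(\widetilde{G}_H,\tilde{e}_H))=H$, with identity $\tilde{e}_H=\langle C_{e_G}\rangle_H$; moreover $\widetilde{G}_H$ is connected since every class $\langle f\rangle_H$ is joined to $\tilde{e}_H$ by the standard path of truncations of $f$. Thus $(\widetilde{G},q)$ and $(\widetilde{G}_H,p_H)$ are two connected coverings of the connected, locally path connected space $G$ whose associated subgroups coincide. The central step is to invoke the classification of connected covers of Torabi et al. \cite{t}, recalled just before Theorem \ref{Lifting Criterion}: since $\pi_1(G,e_G)$ is abelian, equal subgroups are a fortiori conjugate, so the two coverings are equivalent, and because the subgroups are actually equal rather than merely conjugate the equivalence can be chosen to preserve base points. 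This yields a homeomorphism $\varphi:\widetilde{G}\to\widetilde{G}_H$ with $p_H\circ\varphi=q$ and $\varphi(\tilde{e})=\tilde{e}_H$.

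Finally I would transport the structure: define a multiplication on $\widetilde{G}$ by $a\cdot b:=\varphi^{-1}(\varphi(a)\,\varphi(b))$ and an inversion by $a^{-1}:=\varphi^{-1}((\varphi(a))^{-1})$. Because $\varphi$ is a homeomorphism and $\widetilde{G}_H$ is a topological group, these operations are continuous and make $\widetilde{G}$ a topological group with identity $\tilde{e}=\varphi^{-1}(\tilde{e}_H)$, and by construction $\varphi$ becomes an isomorphism of topological groups. Then for $a,b\in\widetilde{G}$ one computes, using $q=p_H\circ\varphi$ and that $p_H$ is a homomorphism, $q(a\cdot b)=p_H(\varphi(a\cdot b))=p_H(\varphi(a)\varphi(b))=p_H(\varphi(a))\,p_H(\varphi(b))=q(a)\,q(b)$, so $q$ is a homomorphism.

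The main obstacle is the middle step, namely producing the base-point-preserving covering equivalence $\varphi$: everything hinges on having the classification of connected coverings of $G$ available and on exploiting the commutativity of $\pi_1(G,e_G)$ to upgrade conjugacy of subgroups to equality, which is what forces $\varphi$ to carry $\tilde{e}$ exactly to $\tilde{e}_H$; the transport of structure and the verification that $q$ is a homomorphism are then purely formal. A secondary issue worth flagging is the restriction to connected $\widetilde{G}$: for an arbitrary covering space one would either restrict to the connected case (as in the source result of Rotman quoted in the introduction) or treat the connected components separately, and the clean covering-group structure with $q$ a homomorphism is obtained precisely in the connected setting.
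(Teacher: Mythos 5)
Your proposal is correct and follows essentially the same route as the paper: set $H=q_*(\pi_1(\widetilde{G},\tilde e))$, build the covering group $\widetilde{G}_H$ via Theorem \ref{exist}, obtain a homeomorphism $\varphi$ with $p_H\circ\varphi=q$ from the equivalence of the two coverings, and transport the group structure through $\varphi$. You actually supply details the paper leaves implicit (verifying the Spanier-group hypothesis of Theorem \ref{exist} via evenly covered neighborhoods, justifying the base-point-preserving equivalence, and flagging the connectedness assumption on $\widetilde{G}$), all of which are correct.
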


\begin{proof}
Let $G$ be a topological group and $( \widetilde{G}, q)$ is a covering space of $G$. If $q_*(\pi_1(\widetilde{G}, e_{\widetilde{G}})=H$ , then by Theorem \ref{exist} there exist covering group $( \widetilde{G}_H, p_H)$ of $G$. The covering spaces $( \widetilde{G}, q)$ and $( \widetilde{G}_H, p_H)$ are equivalent. So there exists homeomorphism $\varphi: \widetilde{G} \to \widetilde{G}_H$ such that $p_H \circ \varphi =q$. Now we define multiplication map $\psi: \widetilde{G} \times \widetilde{G} \to \widetilde{G}$ by $\psi(t,s)=\varphi^{-1}(\varphi(t).\varphi(s))$. Since $\psi$ is homeomorphism,  $p_H \circ \varphi =q$ and $\widetilde{G}_H$  is a topological group, $ \widetilde{G}$ is a topological group with continuous multiplication map $\psi$ and $q$ is homomorphism.
\end{proof}
%=====================================================================================================================================
%% The Appendices part is started with the command \appendix;
%% appendix sections are then done as normal sections
%% \appendix

%% \section{}
%% \label{}

%% References
%%
%% Following citation commands can be used in the body text:
%% Usage of \cite is as follows:
%%   \cite{key}         ==>>  [#]
%%   \cite[chap. 2]{key} ==>> [#, chap. 2]
%%

%% References with bibTeX database:

%\bibliographystyle{elsarticle-num}
%\bibliography{<your-bib-database>}

%% Authors are advised to submit their bibtex database files. They are
%% requested to list a bibtex style file in the manuscript if they do
%% not want to use elsarticle-num.bst.

%% References without bibTeX database:

% \begin{thebibliography}{00}

\

\

\
\\
\\
\textbf{Reference}

%% \bibitem must have the following form:
%%   \bibitem{key}...
%%

% \bibitem{}

% \end{thebibliography}

\end{document}